\def\dj{d\kern-0.4em\char"16\kern-0.1em}
\def\Dj{\mbox{\raise0.3ex\hbox{-}\kern-0.4em D}}
\newtheorem{theorem}{Theorem}
\newtheorem{lemma}[theorem]{Lemma}
\newtheorem{corollary}[theorem]{Corollary}
\newtheorem{definition}[theorem]{Definition}
\renewcommand{\@dotsep}{10000}
\newenvironment{proof}
{\noindent
{\it Proof.}}
{\hspace{\stretch{1}}%
$\Box$}
\newcounter{primer}[section]
\DeclareMathOperator{\Is}{Is}
\DeclareMathOperator{\Cen}{Cen}
\tikzset{my loop/.style =  {to path={
  \pgfextra{}
  [looseness=4,min distance=5mm]
  \tikz@to@curve@path},font=\sffamily\small
  }}  
\newcommand{\rightarrowp}{\stackrel{+}{\rightarrow}}
\newcommand{\rightarrowpm}{\stackrel{\pm}{\rightarrow}}
\newcommand{\simp}{\stackrel{p}{\sim}}
\newcommand{\simpp}{\stackrel{p_+}{\sim}}
\newcommand{\simppm}{\stackrel{p_\pm}{\sim}}
\newcommand{\myitem}[1]{%
\item[#1]\protected@edef\@currentlabel{#1}%
}
\begin{document}

\thispagestyle{empty}
\begin{center}
\Large{The Power Graph of a Torsion-Free Group of Nilpotency Class $2$}
\vspace{5mm}

\small{Samir Zahirovi\' c\\
{\it Department of Mathematics and Informatics, Faculty of Sciences,\\
University of Novi Sad, Serbia\\
\href{mailto:samir.zahirovic@dmi.uns.ac.rs}{samir.zahirovic@dmi.uns.ac.rs}}}
\end{center}

\begin{abstract}
The directed power graph $\mathcal G(\mathbf G)$ of a group $\mathbf G$ is the simple digraph with vertex set $G$ in which $x\rightarrow y$ if $y$ is a power of $x$, the power graph is the underlying simple graph, and the enhanced power graph of $\mathbf G$ is the simple graph with the same vertex set such that two vertices are adjacent if they are powers of some element of $\mathbf G$.

In this paper three versions of the definition of the power graphs are discussed, and it is proved that the power graph by any of the three versions of the definitions determines the other two up to isomorphism. It is also proved that, if $\mathbf G$ is a torsion-free group of nilpotency class $2$ and if $\mathbf H$ is a group such that $\mathcal G(\mathbf H)\cong\mathcal G(\mathbf G)$, then $\mathbf G$ and $\mathbf H$ have isomorphic directed power graphs, which was an open problem proposed by Cameron, Guerra and Jurina \cite{cameron3}.
%
%
\end{abstract}

\section{Introduction}


The directed power graph of a group is the simple digraph with its the vertex set being the universe of the group and in which $x\rightarrow y$ if $y\in\langle x\rangle$; the power graph of a group is the underlying simple graph.  The directed power graph was introduced by Kelarev and Quinn \cite{prvi kelarev i kvin}, while the power graph was first studied by Chakrabarty, Ghosh and Sen \cite{cakrabarti}. By results of \cite{power graph 2}, the power graph of a finite group determines the directed power graph up to isomorphism. The power graph has been studied by many authors, including \cite{power graph, power graph 2, cameron3, on the structure, sitov, drugi kelarev i kvin, kelarev3, kelarev4, cameron4}. The reader is referred to the survey \cite{pregled za power grafove} for more details. 

Beside the just mentioned definition for the power graphs, many authors, including \cite{prvi kelarev i kvin}, often define the power graph as the graph in which two vertices are adjacent if there exists a positive integer $n$ such that $y=x^n$ or $x=y^n$; we shall call this graph the $N$-power graph of the group. Beside this definition, authors of \cite{cameron3} brought the definition of the power graph by which two vertices are adjacent if there is a non-zero integer such that $y=x^n$ or $x=y^n$, and the graph defined in this manner we will call the $Z^\pm$-power graph. In the case of torsion groups it is obvious that all three of these definitions of the power graph produce the same graph. It is also easily seen that, in the case of torsion-free groups, the power graph and the $Z^\pm$-power graph determine each other. In Section \ref{sekcija o raznim definicijama} it is proved that the power graph, the $N$-power graph and the $Z^\pm$-power graph of any group determine each other.

In Section \ref{torziono slobodne grupe} the power graph of a torsion-free group is investigated. Cameron, Guerra and Jurina \cite{cameron3} showed that, if in a torsion-free group $\mathbf G$ every element is contained in a unique maximal cyclic subgroup, and if $\mathcal G(\mathbf G)\cong\mathcal G(\mathbf H)$, then $\mathbf G$ and $\mathbf H$ have isomorphic directed power graphs. They also showed that, for any pair of torsion-free groups of nilpotency class $2$, isomorphism of the power graphs implies the isomorphism of the directed power graphs. The authors of \cite{cameron3} asked whether this implication also holds when at least one of the groups is torsion-free and of nilpotency class $2$, and Section \ref{torziono slobodne grupe} answers this question affirmatively. Beside that, in \cite{cameron3} it was proved that there is not group non-isomorphic to $\mathbb Z$ whose power graph is isomorphic to $\mathcal G(\mathbb Z)$. In Section \ref{torziono slobodne grupe} the same is proved for the group of rationals.




\section{Basic notions and notations}

{\bf Graph} $\Gamma$ is a structure $(V(\Gamma),E(\Gamma))$, or simply $(V,E)$, where $V$ is a set, and $E\subseteq V^{[2]}$ is a set of two-element subsets of $V$. Set $V$ is called the set of vertices, while $E$ is called the set of edges. We say that vertices $x$ and $y$  are adjacent in $\Gamma$ if $\{x,y\}\in E$, and we denote it with $x\sim_\Gamma y$, or simply $x\sim y$. Graph $\Delta=(V_1,E_1)$ is said to be a subgraph of {\bf subgraph} of graph $\Gamma=(V_2,E_2)$ if $V_1\subseteq V_2$ and $E_1\subseteq E_2$. $\Delta$ is an {\bf induced subgraph} of $\Gamma$ if $V_1\subseteq V_2$, and $E_1=E_2\cap V_1^2$. In this case we also say that graph $\Delta$ is an induced subgraph of $\Gamma$ by $V_1$, and we write this fact by $\Delta=\Gamma[V_1]$. The {\bf strong product} of graphs $\Gamma$ and $\Delta$ is the graph $\Gamma\boxtimes\Delta$ such that
\begin{align*}
(x_1,y_1)\sim_{\Gamma\boxtimes\Delta}(x_2,y_2)\text{ if }&(x_1=x_2\wedge y_1\sim_\Delta y_2)\\
&\vee(x_1\sim_\Gamma x_2\wedge y_1= y_2)\\
&\vee(x_1\sim_\Gamma x_2\wedge y_1\sim_\Delta y_2).
\end{align*}


{\bf Directed graph} (or {\bf digraph}) $\vec \Gamma$ is a structure $(V(\vec\Gamma),E(\vec\Gamma))$, or simply $(V,E)$, where $V$ is a set, and $E$ is an irreflexive relation on $V$. Here $V$ and $E$ are the set of vertices and the set of edges, respectively. For vertices $x$ and $y$ of $\Gamma$ such that $(x,y)\in E$, we say that there is directed edge from $x$ to $y$, and we denote it with with $x\rightarrow_{\vec\Gamma} y$, or simply $x\rightarrow y$. In this case, we also say that $y$ is a direct successor of $x$, and that $x$ is a direct predecessor of $y$.

Throughout this paper, we shall denote algebraic structures, such as groups or loops, with bold capitals, while we will denote their universes with respective regular capital letters.  For $x,y\in G$, where $\mathbf G$ is a group, we shall write $x\approx_{\mathbf G} y$, or simply $x\approx y$, if $\langle x\rangle=\langle y\rangle$. Also, $o(x)$ shall denote the order of an element $x$ of a group. Throughout this paper, will will say that a loop is {\bf power-associative} if every its subloop generated by one element is a group.

\begin{definition}\label{definicije pridruzenih grafova}
The {\bf directed power graph} of a group $\mathbf G$ is the digraph $\vec{\mathcal G}(\mathbf G)$ whose vertex set is $G$, and in which there is a directed edge from $x$ to $y$, $x\neq y$, if there exists $n\in\mathbb Z$ such that $y=x^n$. If there is a directed edge from $x$ to $y$ in $\vec{\mathcal G}(\mathbf G)$, we shall denote it with $x\rightarrow_{\mathbf G} y$, or shortly with $x\rightarrow y$.

The {\bf power graph} of a group $\mathbf G$ is the graph $\mathcal G(\mathbf G)$ whose vertex set is $G$, and whose vertices $x$ and $y$, $x\neq y$, are adjacent if there exists $n\in\mathbb Z$ such that $y=x^n$ or $x=y^n$. If $x$ and $y$ are adjacent in $\mathcal G(\mathbf G)$, we shall denote it with $x\simp_{\mathbf G} y$, or shortly with $x\simp y$.
\end{definition}

For a graph $\Gamma$ and its vertex $x$, $\overline N_{\Gamma}(x)$ denotes the closed neighborhood of $x$ in the graph $\Gamma$. If $\overline N_\Gamma(x)=\overline N_\Gamma(y)$, for vertices $x$ and $y$ of $\Gamma$, we shall shortly write as $x\equiv_{\Gamma} y$, and, if $\Gamma$ is the power graph of a group $\mathbf G$, then we shall write $x\equiv_{\mathbf G} y$ instead of $x\equiv_{\mathcal G(\mathbf G)} y$. 


Notice that, in the power graph of a group, the identity element is adjacent to all other vertices of the graph. Therefore, eccentricity of each of these graphs is $1$. Because of that, in the case of the power graph and the enhanced power graph, it is justifiable to call the set of vertices, that are adjacent to all vertices of the graph other than itself, its center. For a group $\mathbf G$, the set
$$\{x\in G\mid x\sim_{\Gamma} y\text{ for all }y\in G\setminus\{x\}\}$$
we will call the {\bf center of the power graph} $\mathcal G(\mathbf G)$, denoted by $\Cen(\mathcal G(\mathbf G))$.

Let us also note that the definitions of the directed power graph and the power graph can be applied not only on groups, but on power-associative loops too in the same manner. In this case, the center of the power graph and the center of the enhanced power graph are defined in the analogous way.

It is easily seen that the directed power graph of a group determines the power graph, and Cameron \cite{power graph 2} proved that, in the case of finite groups, the power graph determines the directed power graph too. Cameron, Guerra and Jurina \cite{cameron3} proved the same for some classes of torsion-free groups as well.

\section{On different definitions of the power graph}
\label{sekcija o raznim definicijama}

This section deals with three different versions of the definition of the power graph. Namely, by Definition \ref{definicije pridruzenih grafova}, which is consistent with the definitions from \cite{on the structure} and \cite{cameron3}, for elements $x$ and $y$ of a group $\mathbf G$ holds $x\rightarrow y$ in $\vec{\mathcal G}(\mathbf G)$ if there exists $n\in \mathbb Z$ such that $y=x^n$, and $x$ and $y$ are adjacent in $\mathcal G(\mathbf G)$ if there exists $n\in\mathbb Z$ such that $y=x^n$ and $x=y^n$. Let us introduce the definition of the power graph that was introduced in \cite{prvi kelarev i kvin}. To avoid any confusion, we bring the terms of $N$-power graph and directed $N$-power graph. 

\begin{definition}
The {\bf directed $N$-power graph} of a group $\mathbf G$ is the digraph $\vec{\mathcal G}^+(\mathbf G)$ whose vertex set is $G$, and in which there is a directed edge from $x$ to $y$, $x\neq y$, if there exists $n\in\mathbb N$ such that $y=x^n$. If there is a directed edge from $x$ to $y$ in $\vec{\mathcal G}^+(\mathbf G)$, we shall denote it with $x\rightarrowp_{\mathbf G} y$, or shortly with $x\rightarrowp y$.

The {\bf $N$-power graph} of a group $\mathbf G$ is the graph $\mathcal G^+(\mathbf G)$ whose vertex set is $G$, and whose vertices $x$ and $y$, $x\neq y$, are adjacent if there exists $n\in\mathbb N$ such that $y=x^n$ or $x=y^n$. If $x$ and $y$ are adjacent in $\mathcal G^+(\mathbf G)$, we shall denote it with $x\simpp_{\mathbf G} y$, or shortly with $x\simpp y$.
\end{definition}

An advantage of the above definition is that it can be applied to any power-associative groupoid where inverse elements, or even the identity element, might not exist.  On the other hand, in the case of torsion groups, $N$-power graph is the same as the power graph.

Beside that, in \cite{cameron3} authors used another version of the definition in which they insisted on the exponent from the expression $y=x^n$ to be non-zero integer. The graph defined in this manner we shall call $Z^\pm$-power graph.

\begin{definition}
The {\bf directed $Z^\pm$-power graph} of a group $\mathbf G$ is the digraph $\vec{\mathcal G}^{\pm}(\mathbf G)$ whose vertex set is $G$, and in which there is a directed edge from $x$ to $y$, $x\neq y$, if there exists $n\in\mathbb Z\setminus\{0\}$ such that $y=x^n$. If there is a directed edge from $x$ to $y$ in $\vec{\mathcal G}^\pm(\mathbf G)$, we shall denote it with $x\rightarrowpm_{\mathbf G} y$, or shortly with $x\rightarrowpm y$.

The {\bf $Z^\pm$-power graph} of a group $\mathbf G$ is the graph $\mathcal G^{\pm}(\mathbf G)$ whose vertex set is $G$, and in which $x$ and $y$, $x\neq y$, are adjacent if there exists $n\in\mathbb Z\setminus\{0\}$ such that $y=x^n$ or $x=y^n$. If $x$ and $y$ are adjacent in $\mathcal G^\pm(\mathbf G)$, we shall denote it with $x\simppm_{\mathbf G} y$, or shortly with $x\simppm y$.
\end{definition}

Let us note that the directed $Z^\pm$-power graph and the $Z^\pm$-power graph of power-associative loops  are defined in the same way. This version of the definition of the power graphs helped authors of \cite{cameron3} to make their arguments simpler while studying the power graph of torsion-free groups, while, in the case of torsion-free groups, the power graph and the $Z^\pm$-power graph determine each other. Beside that, it's easily noticed that the directed power graph and the directed $Z^\pm$-power graph of a group determine each other. In this section it is proved that the power graph, the $N$-power graph, and the $Z^\pm$-power graph of any power-associative loop determine each other.

\begin{lemma}\label{pomocna za nenula i obicni stepeni graf}
Let $\mathbf G$ be a power-associative loop such that $\lvert\Cen(\mathcal G(\mathbf G))\rvert>1$. Then the following hold:
\begin{enumerate}
\item $\mathbf G$ is the infinite cyclic group, or all elements of the loop $\mathbf G$ have finite orders.
\item $\mathbf G\cong(\mathbb Z,+)$ if and only if $G$ is union of countably many $\equiv_{\mathbf G}$-classes of cardinality $2$ and one $\equiv_{\mathbf G}$-class of cardinality $3$.
\end{enumerate}
\end{lemma}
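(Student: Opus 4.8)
The plan is to prove the two items separately, using throughout that the identity $e$ always lies in $\Cen(\mathcal G(\mathbf G))$, so the hypothesis $\lvert\Cen(\mathcal G(\mathbf G))\rvert>1$ furnishes an element $a\in\Cen(\mathcal G(\mathbf G))\setminus\{e\}$ adjacent to every other vertex.

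For item 1 I would argue by cases on whether $\mathbf G$ has an element of infinite order. If it does not, the second alternative holds and there is nothing to prove, so suppose $b\in G$ has infinite order. First I would show that the distinguished central element $a$ itself has infinite order: from $a\simp b$ we get $b=a^n$ or $a=b^n$ for some $n\in\mathbb Z$, and in either case (using $a\neq e$, which forces $n\neq 0$ in the second) a finite order of $a$ would make $b$ have finite order, a contradiction. Next I would prove $G=\langle a\rangle$. If some $z\in G$ lay outside $\langle a\rangle$, then $z\simp a$ together with $z\notin\langle a\rangle$ would give $a=z^m$ with $\lvert m\rvert\geq 2$; since $a$ has infinite order so does $z$, and then a direct computation shows $z^{m+1}$ is adjacent to $a$ neither via $z^{m+1}=a^k$ (this needs $m\mid m+1$) nor via $a=(z^{m+1})^k$ (this needs $(m+1)\mid m$), both impossible for $\lvert m\rvert\geq 2$. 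This contradicts $a\in\Cen(\mathcal G(\mathbf G))$, so $G=\langle a\rangle$, which is a cyclic group of infinite order by power-associativity, whence $\mathbf G\cong(\mathbb Z,+)$.

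For item 2 I would treat the two implications. For the forward one, assuming $\mathbf G\cong(\mathbb Z,+)$, I would compute $\equiv_{\mathbf G}$ directly: integers $n,m$ are adjacent in $\mathcal G(\mathbb Z)$ exactly when one divides the other, so $\overline N(n)=\mathbb Z$ precisely for $n\in\{-1,0,1\}$, giving one class of size $3$; for $\lvert n\rvert\geq 2$ one checks $\overline N(n)=\overline N(-n)$ and, by exhibiting a distinguishing neighbour (a suitable prime multiple of $n$), that no further integer shares this neighbourhood, so each remaining class is exactly $\{n,-n\}$, of size $2$, and there are countably many of them. For the converse I would again invoke item 1: under the stated partition, $G$ is countably infinite and $\lvert\Cen(\mathcal G(\mathbf G))\rvert\in\{2,3\}$, so it suffices to exclude the torsion case. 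The crucial observation is that each $\approx_{\mathbf G}$-class sits inside an $\equiv_{\mathbf G}$-class and hence has at most $3$ elements; as the $\approx_{\mathbf G}$-class of $x$ consists of the $\varphi(o(x))$ generators of $\langle x\rangle$ and no integer has $\varphi=3$, every element satisfies $o(x)\in\{1,2,3,4,6\}$. Fixing $a\in\Cen(\mathcal G(\mathbf G))\setminus\{e\}$ of order $d$, adjacency of $a$ to an element outside $\langle a\rangle$ forces that element into a cyclic group properly containing $\langle a\rangle$, of order a multiple of $d$ lying in $\{1,2,3,4,6\}$; this already rules out $d\in\{4,6\}$, which would make $G$ finite. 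The values $d=3$ and $d=2$ are then eliminated by tracking orders of powers of non-central witnesses: when $d=3$ every non-central element is forced to have order $6$, yet the cube of such an element has order $2$ and is itself non-central, a contradiction; when $d=2$ an analogous argument first forbids order-$6$ elements and then shows $\Cen(\mathcal G(\mathbf G))=\{e,a\}$ with every $\equiv_{\mathbf G}$-class of size $2$, contradicting the presence of a size-$3$ class. Hence the torsion case cannot occur and $\mathbf G\cong(\mathbb Z,+)$.

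The main obstacle is precisely this torsion elimination in the converse of item 2. Unlike in item 1, comparability of the central element to everything does not by itself pin down the structure; one must combine the order restriction $o(x)\in\{1,2,3,4,6\}$, the boundedness of $\lvert\Cen(\mathcal G(\mathbf G))\rvert$, and a close analysis of how the orders of powers of a non-central element interact with the prescribed class sizes. I expect the bookkeeping of the small cases $d\in\{2,3\}$, rather than any single clever idea, to be the delicate part of the argument.
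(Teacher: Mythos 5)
Your overall strategy coincides with the paper's: for item~1 you pick a non-identity central vertex $a$, show it has infinite order once any infinite-order element exists, and prove $G=\langle a\rangle$ by exhibiting a power of a hypothetical $z\notin\langle a\rangle$ not adjacent to $a$; for the forward half of item~2 you use the same coprime/divisibility witness the paper uses. But one step of your item~1 fails as written. From $a=z^{m}$ with $\lvert m\rvert\ge 2$ you take $z^{m+1}$ as the witness and claim that $a=(z^{m+1})^{k}$ would need $(m+1)\mid m$, ``impossible for $\lvert m\rvert\ge 2$.'' That is false at $m=-2$: there $m+1=-1$ divides $m=-2$, and indeed $z^{m+1}=z^{-1}$ satisfies $a=z^{-2}=(z^{-1})^{2}$, so your witness \emph{is} adjacent to $a$ and no contradiction arises. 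The repair is immediate: at $m=-2$ use $z^{m-1}=z^{-3}$ instead, or, as the paper does, take $z^{q}$ for an exponent $q\ge 2$ coprime to $m$ (e.g.\ a prime not dividing $m$), which rules out both divisibility conditions uniformly for all $\lvert m\rvert\ge 2$. With that one-line patch your item~1 and the forward half of item~2 are correct and essentially identical to the paper's proof.

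On the converse half of item~2 your route is genuinely more detailed than the paper's, and the extra bookkeeping you anticipate is exactly where it pays off. The paper, after bounding orders by $6$ via the ``more than two generators'' observation (your $\varphi(o(x))\le 3$ argument is the same bound, slightly sharper since it also excludes order $5$), asserts tersely that there are countably many elements of order $3$ or $6$ and that the identity's $\equiv_{\mathbf G}$-class would then be a singleton. Your case $d=2$ shows why this is too quick: a power-associative loop consisting of $e$, a central involution $a$, and infinitely many order-$4$ elements all squaring to $a$ has identity class $\{e,a\}$ of size $2$, not $1$, and every class of size exactly $2$ --- so the contradiction there comes from the \emph{absence of a size-$3$ class}, precisely as your $d=2$ analysis concludes, not from a singleton class. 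Your explicit elimination of $d\in\{4,6\}$ (forcing $G$ finite), of $d=3$ (via the cube of an order-$6$ element), and of $d=2$ (via the order-$3$ square of an order-$6$ element and then the class-size count) is a complete and correct treatment of the torsion case where the paper's own write-up is sketchy.
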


\begin{proof}
Let $\mathbf G$ be a power-associative loop, and let $\lvert\Cen(\mathcal G(\mathbf G))\rvert>1$. Notice that in the power graph no non-identity element of finite order is adjacent to an element of infinite order. Therefore, either all non-identity elements of $\mathbf G$ have finite order, or all elements of $\mathbf G$ are of infinite order. Suppose that $\mathbf G$ doesn't contain any non-identity element of finite order. Since  $\lvert\Cen(\mathcal G(\mathbf G))\rvert>1$, then there exists $x\in G\setminus\{e_{\mathbf G}\}$ which is adjacent in $\mathcal G(\mathbf G)$ with all elements of $G\setminus\{x\}$. Let us show that then $\mathbf G=\langle x\rangle$. If $\mathbf G\neq\langle x\rangle$, then there exist $y\in G\setminus\langle x\rangle$ and $n\in\mathbb Z\setminus\{-1,0,1\}$ such that $x=y^n$. Then for $m\in\mathbb N$ relatively prime to $n$ holds $x\not\simp y^m$, which is a contradiction. Therefore, $\mathbf G$ contains only elements of finite order or  $\mathbf G\cong(\mathbb Z,+)$, and thus, (a) has been proved.

If $\mathbf G\cong(\mathbb Z,+)$ is generated by $x$, then $x$, $x^{-1}$ i $e_{\mathbf G}$ make up an $\equiv_{\mathbf G}$-class of cardinality $3$. Let $y\in G\setminus\{x,x^{-1},e_{\mathbf G}\}$. Obviously, $y\equiv_{\mathbf G} y^{-1}$. Suppose that there exists $z\not\in\{ y,y^{-1}\}$ such that $z\equiv_{\mathbf G} y$. Then there exists $n\in\mathbb Z\setminus\{-1,0,1\}$ such that $z=y^n$ or $y=z^n$. Then, for $m$ relatively prime to $n$, follows $z\not\simp y^m\simp y$ or $y\not\simp z^m\simp z$, which is a contradiction. Therefore, any non-identity element of $\mathbf G$ which isn't a generator of $\mathbf G$ is contained in an $\equiv_{\mathbf G}$-class of cardinality $2$. This proves one implication of (b). In order to prove the other one as well, suppose that $\mathbf G\not\cong(\mathbb Z,+)$, which, by (a), implies that $\mathbf G$ contains only elements of finite order. Were there an element $x$ of $\mathbf G$ of order $n>6$, then  $\langle x\rangle$ would have more than $2$ generators, and these generators of $\mathbf G$ would have the same closed neighborhoods in $\mathcal G(\mathbf G)$. Therefore, $\mathbf G$ contains only elements of order at most $6$, and it contains countably many element of order $3$ or $6$.  It follows that $\mathcal G(\mathbf G)$ contains an $\equiv_{\mathbf G}$-class of cardinality $1$ which contains the indentity element of $\mathbf G$. This proves (b), and thus the lemma has been proved.
\end{proof}

\begin{theorem}\label{medjusobno odredjivanje opsteg i nenula-stepenog grafa}
Let $\mathbf G$ and $\mathbf H$ be power-associative loops. Then $\mathcal G(\mathbf G)\cong\mathcal G(\mathbf H)$ if and only if $\mathcal G^{\pm}(\mathbf G)\cong\mathcal G^{\pm}(\mathbf H)$. 
\end{theorem}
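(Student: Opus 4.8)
The plan is to pin down exactly where the two graphs disagree and then reconstruct each from the other in an isomorphism-invariant way. First I would record the elementary observation that $\mathcal G(\mathbf G)$ and $\mathcal G^{\pm}(\mathbf G)$ induce the same subgraph on $G\setminus\{e_{\mathbf G}\}$: for $x,y\neq e_{\mathbf G}$ the exponent $n=0$ can never witness $y=x^n$ or $x=y^n$, so $x\simp y$ and $x\simppm y$ agree there. Hence the two graphs can differ only at the identity. In $\mathcal G(\mathbf G)$ the vertex $e_{\mathbf G}$ is universal, whereas in $\mathcal G^{\pm}(\mathbf G)$ it is adjacent precisely to the non-identity elements of finite order, since $e_{\mathbf G}=x^n$ with $n\neq 0$ forces $x$ to have finite order. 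Combining this with the fact (already used in the proof of Lemma~\ref{pomocna za nenula i obicni stepeni graf}) that no non-identity element of finite order is adjacent to an element of infinite order, I obtain the structural dictionary: writing $T$ for the torsion elements and $I$ for the infinite-order elements, $\mathcal G(\mathbf G)$ is the cone over $\mathcal G(\mathbf G)-e_{\mathbf G}$, while $\mathcal G^{\pm}(\mathbf G)=\mathcal G(\mathbf G)[T]\sqcup\mathcal G(\mathbf G)[I]$, with $e_{\mathbf G}$ lying in, and universal on, the block $T$.

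With this dictionary both implications collapse to a single recognition problem. Once the partition of $G\setminus\{e_{\mathbf G}\}$ into $T$ and $I$ is known graph-theoretically, the identity is recovered as a universal vertex of the induced torsion subgraph $\mathcal G(\mathbf G)[T]$, and passing from $\mathcal G^{\pm}(\mathbf G)$ to $\mathcal G(\mathbf G)$ (respectively back) is exactly adding (respectively deleting) all edges from $e_{\mathbf G}$ to $I$, an operation that commutes with graph isomorphisms. Any residual ambiguity in the choice of $e_{\mathbf G}$ is harmless: competing universal vertices of $\mathcal G(\mathbf G)[T]$ are closed twins (for instance when the torsion block is a clique), hence are interchanged by an automorphism, so the resulting cones are isomorphic. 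In particular, if $\mathbf G$ is torsion-free then $\mathcal G^{\pm}(\mathbf G)$ is $\mathcal G(\mathbf G)[I]$ together with the single isolated vertex $e_{\mathbf G}$, which makes the correspondence completely transparent in that case.

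For the transport of an actual isomorphism I would invoke Lemma~\ref{pomocna za nenula i obicni stepeni graf} to control the center. An isomorphism carries universal vertices to universal vertices, so when $\Cen(\mathcal G(\mathbf G))=\{e_{\mathbf G}\}$ the identity is automatically preserved. The two degenerate alternatives with $\lvert\Cen\rvert>1$, namely $\mathbf G$ torsion and $\mathbf G\cong(\mathbb Z,+)$, are precisely the ones isolated by the lemma: in the torsion case $\mathcal G(\mathbf G)=\mathcal G^{\pm}(\mathbf G)$ and there is nothing to prove, and the infinite cyclic group is recognised by its $\equiv_{\mathbf G}$-class signature from part~(b), so it can be treated by hand. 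Thus, outside these cases, an isomorphism $\mathcal G(\mathbf G)\cong\mathcal G(\mathbf H)$ restricts to $\mathcal G(\mathbf G)-e_{\mathbf G}\cong\mathcal G(\mathbf H)-e_{\mathbf H}$, and everything reduces to showing that this restriction matches torsion components with torsion components and infinite-order components with infinite-order components.

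The main obstacle is exactly this recognition: I must characterise, purely from the abstract graph, which connected components of $\mathcal G(\mathbf G)-e_{\mathbf G}$ consist of infinite-order elements. The easy half of the separation is that an infinite-order element $x$ always has two non-adjacent powers, for example $x^2\not\simp x^3$, so an infinite-order component is never complete and is always infinite; this already disposes of all finite components and of complete components such as those coming from Pr\"ufer or cyclic $p$-groups. The genuinely hard sub-case is a loop carrying an \emph{infinite} torsion part alongside elements of infinite order, where a torsion component might in principle imitate the shape of an infinite-order one and thereby change which components get coned onto $e_{\mathbf G}$. I expect to resolve this by refining the invariant through the closed-twin quotient $\mathcal G(\mathbf G)/\!\equiv_{\mathbf G}$: an infinite-order element generates an infinite cyclic subgroup whose divisibility pattern and $\equiv_{\mathbf G}$-classes behave like those of $(\mathbb Z,+)$ described in Lemma~\ref{pomocna za nenula i obicni stepeni graf}(b), and this pattern is not reproducible inside a torsion block. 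Establishing that membership in $I$ is therefore determined by the graph forces the restricted isomorphism to respect the $T/I$ partition, and the disjoint-union description then yields $\mathcal G^{\pm}(\mathbf G)\cong\mathcal G^{\pm}(\mathbf H)$, completing both directions.
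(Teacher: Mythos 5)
Your overall route is the same as the paper's: the same dictionary ($\mathcal G(\mathbf G)$ and $\mathcal G^{\pm}(\mathbf G)$ agree off the identity, and $\mathcal G^{\pm}(\mathbf G)$ is the disjoint union of the torsion block, on which $e_{\mathbf G}$ is universal, and the infinite-order block), the same appeal to Lemma~\ref{pomocna za nenula i obicni stepeni graf} to dispose of the case $\lvert\Cen(\mathcal G(\mathbf G))\rvert>1$, and the same twin-transposition repair when an isomorphism of $Z^{\pm}$-power graphs misplaces the identity. But at the step you yourself single out as the crux --- recognizing, from the abstract graph alone, which connected components consist of infinite-order elements --- you give no proof, only ``I expect to resolve this,'' and the invariant you gesture at is not the right one. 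The $\equiv$-class signature of $(\mathbb Z,+)$ from Lemma~\ref{pomocna za nenula i obicni stepeni graf}(b) describes the \emph{full} power graph of the infinite cyclic group, identity and generator class included; a component of infinite-order elements of $\mathcal G^{\pm}(\mathbf G)$ need not resemble it at all. It can look like a component of $\mathcal G^{\pm}(\mathbb Q)$ (no maximal ``generator'' class, dense divisibility pattern), and in loops or groups with non-unique roots it is stranger still: in the torsion-free group $\langle a,b\mid bab^{-1}=a^{-1}\rangle$ one has $(ba)^2=b^2$, so roots are not unique, and letting $b$ invert many free generators produces infinite-order components far from any ``$\mathbb Z$-pattern.'' Thus ``this pattern is not reproducible inside a torsion block'' is precisely the assertion that needs proving, and your sketch supplies no mechanism for it.

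What the paper actually proves, and what your proposal is missing, are two concrete computations. First, for $x$ of infinite order the $\equiv$-class of $x$ in $\mathcal G^{\pm}(\mathbf G)$ is exactly $\{x,x^{-1}\}$: if $z\equiv x$ with $z\not\in\{x,x^{-1}\}$, then $z=x^n$ or $x=z^n$ with $\lvert n\rvert\geq 2$, and for $m\geq 2$ coprime to $n$ the vertex $x^m$ (respectively $z^m$) is adjacent to one of the pair but not the other, a contradiction; moreover such a component contains the infinite clique $x,x^2,x^4,x^8,\dots$. Second, in a torsion block all generators of $\langle x\rangle$ are closed twins (if $x\in\langle z\rangle$ then $\langle x\rangle\subseteq\langle z\rangle$, so every generator of $\langle x\rangle$ is a power of $z$); hence if every $\equiv$-class in the block has cardinality $2$, then $\phi(o(x))\leq 2$ for Euler's function $\phi$, i.e.\ $o(x)\leq 6$, for every $x$ in the block, and bounded orders preclude an infinite clique (orient each edge of a clique toward the power; out-degrees are then at most $5$, so a clique has fewer than $12$ vertices). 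Together these yield the recognition device on which the whole proof turns: a component of $\mathcal G^{\pm}$ consists of infinite-order elements if and only if it is a union of $\equiv$-classes of cardinality $2$ and contains an infinite clique --- used in one direction to show that an isomorphism of power graphs already respects $Z^{\pm}$-adjacency, and in the other to relocate the identity. Without these two facts --- in particular the totient bound, which appears nowhere in your text --- your argument is a reduction to an unproved claim; with them, the remainder of your plan goes through and coincides with the paper's proof.
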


\begin{proof}
Let $\mathbf G$ be power-associative loop, and let $\Gamma=\mathcal G(\mathbf G)$, $\Gamma^{\pm}=\mathcal G^{\pm}(\mathbf G)$,  $\Delta=\mathcal G(\mathbf H)$, and $\Delta^{\pm}=\mathcal G^{\pm}(\mathbf H)$. Trivially, $\Gamma^{\pm}\subseteq\Gamma$ and $E(\Gamma)\setminus E(\Gamma^{\pm})=\big\{\{e,x\}\mid x\in G\text{ and }o(x)=\infty\big\}$, and similar holds for $\Delta$ and $\Delta^{\pm}$. 

If $\lvert\Cen(\Gamma)\rvert>1$, then, by Lemma \ref{pomocna za nenula i obicni stepeni graf}, if $\mathbf G\cong(\mathbb Z,+)$, it follows $\mathbf H\cong(\mathbb Z,+)$, which trivially implies the stated equivalence. Similarly, if $\mathbf G\not\cong(\mathbf Z,+)$, then $\mathbf G$ and $\mathbf H$ don't have any element of infinite order. Then $\Gamma=\Gamma^{\pm}$ and $\Delta=\Delta^{\pm}$, so the equivalence holds in this case too.

Suppose now that $\lvert\Cen(\Gamma)\rvert=1$. Let $\Gamma\cong\Delta$, and let $\varphi:G\rightarrow H$ be an isomorphism from $\Gamma$ to $\Delta$. Let us show that $\varphi$ is an isomorphism from $\Gamma^{\pm}$ to $\Delta^{\pm}$ too. Let $x,y\in G$, and let $x\simppm y$. Obviously, then $\varphi(x)\simp \varphi(y)$. Suppose that $\varphi(x)\not\simppm\varphi(y)$. Then, without loss of generality, the order of $\varphi(x)$ is infinite and $\varphi(y)=e_{\mathbf H}$. Then $\varphi(x)$ is contained in a connected component of $\Delta\setminus\{e_{\mathbf H}\}$ whose vertex set is union of countably many $\equiv_{\Delta}$-classes of cardinality two and which contains an infinite clique. Obviously, similar holds for $x$ too. If the order of $x$ was finite, then connected component of $\Gamma\setminus\{e_{\mathbf G}\}$ containing $x$ has no element of order greater than $6$, so it doesn't contain any maximal clique. Therefore, the order of $x$ is infinite. Obviously, $y=e_{\mathbf G}$ because $\varphi(y)=e_{\mathbf H}\in\Cen\big(\mathcal G(\mathbf H)\big)$. Therefore, $x\not\simppm y$, which is a contradiction. This proves that $\varphi(x)\simppm\varphi(y)$. Similarly, $\varphi(x)\simppm\varphi(y)$ implies $x\simppm y$.

It remains to be shown that, in the case when $\lvert\Cen(\Gamma)\rvert=1$, iso\-mor\-phism of the $Z^\pm$-power graphs implies iso\-mor\-phism of the power graphs of $\mathbf G$ and $\mathbf H$. Let $\Gamma^{\pm}\cong\Delta^{\pm}$, and $\varphi:G\rightarrow H$ isomorphism from $\Gamma^{\pm}$ to $\Delta^{\pm}$. It is possible that $\varphi(e_{\mathbf G})\neq e_{\mathbf H}$, so let $\hat\varphi(x)=\tau(\varphi(x))$, where $\tau:H\rightarrow H$ is the transposition of $e_{\mathbf H}$ and $\varphi(e_{\mathbf G})$ if $\varphi(e_{\mathbf G})\neq e_{\mathbf H}$, and the identity mapping otherwise. Because a connected component $\Phi$ of $\Delta^{\pm}$ contains elements of infinite order if and only if $V(\Phi)$ is union of countably many $\equiv_{\Delta^{\pm}}$-classes of cardinality $2$ and $\Phi$ contains an infinite clique, then $\varphi(e_{\mathbf G})$ is contained in the connected component that contains $e_{\mathbf H}$, so $\hat\varphi$ is an isomorphism from $\Gamma^{\pm}$ to $\Delta^{\pm}$. Let us show that $\hat\varphi$ is isomorphism from $\Gamma$ to $\Delta$. Let $x,y\in G$, and let $x\simp y$. If $x\simppm y$, then obviously follows $\hat\varphi(x)\simp \hat\varphi(y)$. Suppose that $x\not\simppm y$. Then, without loss of generality, $x=e_{\mathbf G}\in\Cen\big(\mathcal G(\mathbf G)\big)$, and because $\hat\varphi(x)=e_{\mathbf H}$, it follows that $\hat\varphi(x)\simp\hat\varphi(y)$. In the same manner is proved that $\hat\varphi(x)\simp\hat\varphi(y)$ implies $x\simp y$. Therefore, $\hat\varphi$ is an isomorphism from $\Gamma$ to $\Delta$. Thus, the theorem has been proved.
\end{proof}\\

With the previous theorem we have proved that the power graph and the $Z^\pm$-power graph of a power-associative loop carry the same amount of information about the original structure. Let us show that the same holds for the $Z^\pm$-power graph and the $N$-power graph.

\begin{lemma}\label{prva lema za nenula i pozitivno-stepeni graf}
Let $\mathbf G$ be a power-associative loop. Then, for each element $x\in G$ of infinite order, $x$ and $x^{-1}$ lie in different connected components of $\mathcal G^+(\mathbf G)$.
\end{lemma}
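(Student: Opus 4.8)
The plan is to isolate an equivalence relation on the infinite-order elements that refines the connectivity relation of $\mathcal G^+(\mathbf G)$ and that provably separates $x$ from $x^{-1}$. First I would record two routine reductions. Since $\mathbf G$ is power-associative, $\langle x\rangle$ is a cyclic group, and as $o(x)=\infty$ we have $\langle x\rangle\cong(\mathbb Z,+)$; in particular $x^n\neq e_{\mathbf G}$ whenever $n\neq 0$. Next I would observe that no element of infinite order is adjacent in $\mathcal G^+(\mathbf G)$ to an element of finite order: if $a\simpp b$ then $b=a^n$ or $a=b^n$ for some $n\in\mathbb N$, and a positive power of an infinite-order element has infinite order, while a positive power of a finite-order element has finite order. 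Hence the entire connected component of $x$ consists of infinite-order elements, and it suffices to show that no path through infinite-order elements joins $x$ to $x^{-1}$.

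For infinite-order elements $a$ and $b$ I would write $a\,R\,b$ if $a^m=b^n$ for some $m,n\in\mathbb N$, i.e. if some positive power of $a$ coincides with some positive power of $b$. The heart of the argument is to prove that $R$ is an equivalence relation and that $a\simpp b$ implies $a\,R\,b$. Reflexivity and symmetry are immediate. For transitivity, suppose $a^{m_1}=b^{n_1}$ and $b^{m_2}=c^{n_2}$; setting $u=a^{m_1}=b^{n_1}$ and raising the first equality to the power $m_2$ gives $a^{m_1m_2}=u^{m_2}=b^{n_1m_2}$, while raising the second to the power $n_1$ gives $b^{n_1m_2}=c^{n_1n_2}$, so $a^{m_1m_2}=c^{n_1n_2}$ with positive exponents and $a\,R\,c$. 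Each step takes place inside one of the groups $\langle a\rangle$, $\langle b\rangle$, $\langle c\rangle$, which is precisely what power-associativity supplies, and $u^{m_2}$ is the same element whether computed in $\langle a\rangle$ or $\langle b\rangle$ since it depends only on $u$. Finally, if $a\simpp b$ then, without loss of generality, $b=a^n$ for some $n\in\mathbb N$, and then $a^n=b=b^1$, so $a\,R\,b$. Consequently adjacency implies $R$-equivalence, and by transitivity any two vertices in one connected component are $R$-equivalent.

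It then remains to observe that $x\not\mathrel R x^{-1}$: an equality $x^m=(x^{-1})^n=x^{-n}$ with $m,n\in\mathbb N$ would force $x^{m+n}=e_{\mathbf G}$ with $m+n>0$, contradicting $o(x)=\infty$. Since $x$ and $x^{-1}$ are not $R$-equivalent, they cannot lie in the same connected component of $\mathcal G^+(\mathbf G)$, which is the claim. I expect the transitivity step to be the main obstacle: in a loop one cannot rearrange products at will, so the computation must be organized so that every manipulation stays inside a single cyclic subloop, where power-associativity reduces it to ordinary arithmetic of exponents. The conceptual point, which I would emphasize, is to replace the naive hope of two-colouring the component (which a long zigzag path could defeat) by the transitive relation $R$, for which one only ever needs the easy implication that adjacency gives equivalence.
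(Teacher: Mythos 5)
Your proof is correct and takes essentially the same route as the paper: your $R$-equivalence class of $x$ is precisely the paper's set $\overline S(x)=\bigcup_{n,m\in\mathbb N}\{y\mid x^n=y^m\}$, and your transitivity computation (staying inside the cyclic subloops $\langle a\rangle$, $\langle b\rangle$, $\langle c\rangle$) is the paper's closure-under-adjacency step in different packaging, ending with the same observation that $x^m=x^{-n}$ would force $x^{m+n}=e_{\mathbf G}$. The only immaterial difference is that the paper also verifies $\overline S(x)$ is connected, so that it is exactly a connected component, whereas you only need the containment of each component in a single $R$-class.
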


\begin{proof}
Let $x\in G$ be an element of infinite order. For natural numbers $n$ and $m$, let $S(x,n,m)=\{y\mid x^n=y^m\}$, and let $\overline S(x)=\bigcup_{n,m\in\mathbb N}S(x,n,m)$. Let us show that the set $\overline S(x)$ induces a connected component of $\mathcal G^+(\mathbf G)$. Let $y\in\overline S(x)$. Then $y\in S(x,n,m)$ for some $n,m\in\mathbb N$, and suppose that $z\simpp y$ for some $z\in G$. If $z\rightarrowp y$, it is easily noticed that $z\in\overline S(x)$, so suppose that $y\rightarrowp z$. Then $z=y^k$ for some $k\in\mathbb N$. This implies that $z\in S(x,nk,mk)\subseteq \overline S(x)$. Therefore, because $\overline S(x)$ induces a connected subgraph of $\mathcal G^+(\mathbf G)$, it follows that $\overline S(x)$ induces a connected component of $\mathcal G^+(\mathbf G)$, while clearly $x^{-1}\not\in \overline S(x)$. This proves the lemma.
\end{proof}\\

We remind the reader that, for graphs $\Gamma$ and $\Delta$, $\Gamma\boxtimes\Delta$ denotes the strong graph product of $\Gamma$ and $\Delta$, and, for $X\subseteq V(\Gamma)$, $\Gamma[X]$ denotes the subgraph of $\Gamma$ induced by $X$. In this paper $P_2$ shall denote the path with two vertices.

\begin{lemma}\label{druga lema za nenula i pozitivno-stepeni graf}
Let $\mathbf G$ be a power-associative loop. For each connected component $\Phi$ of $\Gamma^\pm=\mathcal G^\pm(\mathbf G)$ containing elements of infinite order, there exist connected components $\Psi_1$ and $\Psi_2$ of $\mathcal G^+(\mathbf G)$ such that:
\begin{multicols}{2}
\begin{enumerate}
\item $V(\Phi)=V(\Psi_1)\cup V(\Psi_2)$;
\item $\Psi_1\cong \Psi_2$; 
\item $\Phi\cong \Psi_1\boxtimes P_2$; 
\item $\Psi_1\cong\Phi/\mathord{\equiv_{\Gamma^\pm}}$.
\end{enumerate}
\end{multicols}
\end{lemma}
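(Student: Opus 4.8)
The plan is to realise $\Psi_1$ and $\Psi_2$ as the two connected components of $\mathcal G^+(\mathbf G)$ into which $\Phi$ splits once negative exponents are forbidden, and to use the inversion map $\iota\colon x\mapsto x^{-1}$ as the common thread through all four assertions. First I would record two standing facts about $\iota$. Since $\langle x\rangle$ is a cyclic group for every $x$, the map $\iota$ is an involutory automorphism of both $\mathcal G^+(\mathbf G)$ and $\Gamma^\pm$: from $y=x^n$ one gets $y^{-1}=(x^{-1})^n$, which preserves the sign of $n$. Hence $\iota$ permutes the connected components of each graph. Moreover, for an element $x$ of infinite order one has $x\simppm x^{-1}$ (take $n=-1$), and in fact $x\equiv_{\Gamma^\pm}x^{-1}$, since any witness $z=x^k$ or $x=z^k$ is rewritten with $x$ replaced by $x^{-1}$ and $k$ by $-k$. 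I would also note that every vertex of $\Phi$ has infinite order, because in $\Gamma^\pm$ a neighbour of an infinite-order element is again of infinite order; thus $\iota$ maps $V(\Phi)$ into itself.

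The technical heart is a sign-bookkeeping observation: whenever $x\simppm y$, one of the $\mathcal G^+$-adjacencies $x\simpp y$, $x\simpp y^{-1}$, or $x^{-1}\simpp y$ holds. Indeed, writing the witnessing relation as $y=x^n$ or $x=y^n$ with $n\neq0$, a positive $n$ gives $x\simpp y$ directly, while a negative $n$ converts $x^n$ into a positive power of $x^{-1}$ (respectively $y^{-1}$). With this in hand I fix $x_0\in V(\Phi)$, let $\Psi_1$ be the $\mathcal G^+$-component of $x_0$ and $\Psi_2$ that of $x_0^{-1}$; by Lemma \ref{prva lema za nenula i pozitivno-stepeni graf} these are distinct components, and $\iota(\Psi_1)=\Psi_2$, which already yields (b). For (a) I would show that $V(\Psi_1)\cup V(\Psi_2)$ is closed under $\simppm$-steps inside $\Phi$: the observation sends a $\simppm$-neighbour of a vertex of $\Psi_1$ either into $\Psi_1$ or, via an inverse, into $\iota(\Psi_1)=\Psi_2$. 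Since $\Phi$ is $\Gamma^\pm$-connected and contains $x_0$, this closed set exhausts $V(\Phi)$; disjointness of $V(\Psi_1)$ and $V(\Psi_2)$ is immediate, since they are different components.

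For (d) I first pin down the classes of $\equiv_{\Gamma^\pm}$ inside $\Phi$, claiming they are exactly the pairs $\{x,x^{-1}\}$. That $x\equiv_{\Gamma^\pm}x^{-1}$ is above; for the converse, if $x\equiv_{\Gamma^\pm}y$ with $y\notin\{x,x^{-1}\}$ then $x\simppm y$ with $\lvert n\rvert\ge2$, and writing say $x=y^n$, a power $y^m$ with $m$ coprime to $n$ satisfies $y^m\simppm y$ but $y^m\not\simppm x$ — a contradiction computed inside the infinite cyclic group $\langle y\rangle$, exactly as in the proof of Lemma \ref{pomocna za nenula i obicni stepeni graf}. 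Since each such pair meets $\Psi_1$ in a single vertex (its other member lying in $\Psi_2$), the map $x\mapsto\{x,x^{-1}\}$ is a bijection $V(\Psi_1)\to V(\Phi)/\mathord{\equiv_{\Gamma^\pm}}$. To see it is an isomorphism I use the sign observation once more: for distinct $x,y\in V(\Psi_1)$ the adjacency $x\simppm y$ forces the positive case $x\simpp y$, because the other two cases would place $y$ or $x$ in $\Psi_2$; this matches the edges of $\Psi_1$ with those of the quotient.

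Finally, (c) follows by assembling the same data. I would define $g\colon V(\Psi_1)\times\{1,2\}\to V(\Phi)$ by $g(x,1)=x$ and $g(x,2)=x^{-1}$; this is a bijection by (a), (b) and disjointness. Unravelling the strong product, $(x,i)\sim(y,j)$ in $\Psi_1\boxtimes P_2$ iff $x\sim_{\Psi_1}y$, or $x=y$ and $i\neq j$. Checking that $g$ respects adjacency splits into the diagonal cases $i=j$, which reduce to the equivalence $x\simppm y\Leftrightarrow x\simpp y$ for distinct vertices of $\Psi_1$ used in (d) (for $i=j=2$ one also uses that $\iota$ preserves $\simppm$), and the off-diagonal cases $i\neq j$, which reduce to the statement that $x\simppm y^{-1}$ holds iff $x=y$ or $x\simpp y$ — again the sign computation, with the negative subcases excluded because they would force membership in $\Psi_2$. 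The main obstacle throughout is this disciplined tracking of signs together with the component-separation supplied by Lemma \ref{prva lema za nenula i pozitivno-stepeni graf}; once those are in place, all four items are bookkeeping.
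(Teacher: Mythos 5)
Your proof is correct and follows essentially the same route as the paper: both realise $\Phi$ as the union of the two connected components of $\mathcal G^+(\mathbf G)$ exchanged by the inversion map (kept apart by Lemma \ref{prva lema za nenula i pozitivno-stepeni graf}), use $x\mapsto x^{-1}$ as an automorphism for \emph{2}, and use $\overline N_{\Gamma^\pm}(x)=\overline N_{\Gamma^\pm}(x^{-1})$ together with the fact that the $\equiv_{\Gamma^\pm}$-classes in $\Phi$ are exactly the pairs $\{x,x^{-1}\}$ for \emph{3} and \emph{4}; the only difference is that the paper obtains \emph{1} via the explicit sets $\overline T(x)=\overline S(x)\cup\overline S(x^{-1})$ instead of your closure argument, while you spell out details the paper leaves implicit. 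One pedantic note: your sign-bookkeeping observation fails in the single case $y=x^{-1}$ (for an element of infinite order none of $x\simpp y$, $x\simpp y^{-1}$, $x^{-1}\simpp y$ then holds), but every place you invoke it either excludes this case ($x,y$ both in $\Psi_1$) or covers it directly by $\iota(\Psi_1)=\Psi_2$, so nothing breaks.
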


\begin{proof}
Let us denote graphs $\mathcal G^+(\mathbf G)$ and $\mathcal G^\pm(\mathbf G)$ with  $\Gamma^+$ i $\Gamma^\pm$, respectively. Let $x\in G$ be an element of infinite order, and let $T(x,n,m)=\{y\mid y^m=x^n\}$, for any $x\in G$, $n\in\mathbb Z\setminus\{0\}$, and $m\in\mathbb N$. Let $\overline T(x)=\bigcup_{n\in\mathbb Z\setminus\{0\},m\in\mathbb N}T(x,n,m)$. It is easily seen that $\overline T(x)$ induces a connected subgraph of $\mathcal G^\pm(\mathbf G)$. Also, similarly like in the proof of Lemma \ref{prva lema za nenula i pozitivno-stepeni graf}, if $z\simppm y$ for some $y\in\overline T(x)$, then $z\in\overline T(x)$. Therefore, $\overline T(x)$ induces a connected component of $\Gamma^\pm$. Now, it is easily seen that $\overline T(x)=\overline S(x)\cup\overline S(x^{-1})$, where $\overline S(x)$ is the set defined in the proof of Lemma \ref{prva lema za nenula i pozitivno-stepeni graf}. As seen in the proof of Lemma \ref{prva lema za nenula i pozitivno-stepeni graf}, $\overline S(x)$ and $\overline S(x^{-1})$ induce different connected components of $\Gamma^+$. Thus, each connected component of $\Gamma^\pm$ containing elements of infinite order is union of two connected components $\Psi_1$ and $\Psi_2$ of $\Gamma^+$, where $\Psi_2$ contains inverses of $\Psi_1$. This proves {\it 1}.

Let $\Phi$ be the connected component of $\Gamma^\pm$ induced by $V(\Psi_1)\cup V(\Phi_2)$. Since the mapping $x\mapsto x^{-1}$ is an automorphism of $\Gamma^+$ which maps $V(\Psi_1)$ onto $V(\Psi_2)$, then $\Psi_1\cong \Psi_2$. This proves {\it 2}. Further, because $x$ and $x^{-1}$ have the same closed neighborhood in $\Gamma^\pm$, and because $\Psi_1$ and $\Psi_2$ are induced subgraphs of $\Phi$, then $\Phi\cong \Psi_1\boxtimes P_2$. Beside that, $V(\Phi)$ is union of $\equiv_{\Gamma^\pm}$-classes of cardinality $2$, each of which contains an element of infinite order and its inverse. Therefore, $\Psi_1$ has one vertex from each $\equiv_{\Gamma^\pm}$-class contained in $V(\Phi)$, so $\Psi_1$ is isomorphic to the graph constructed by replacing each $\equiv_{\Gamma^\pm}$-class contained in $\Phi$ with one vertex. Thus, {\it 3.} and {\it 4.} have been proved as well, which finishes our proof.
\end{proof}

\begin{theorem}\label{medjusobno odredjedjivanje pozitivno-stepenog i nenula-stepenog grafa}
Let $\mathbf G$ and $\mathbf H$ be power-associative loops. Then $\mathcal G^+(\mathbf G)\cong\mathcal G^+(\mathbf H)$ if and only if $\mathcal G^\pm(\mathbf G)\cong\mathcal G^\pm(\mathbf H)$.
\end{theorem}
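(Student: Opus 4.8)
The plan is to exploit the fact that, on the torsion elements of a power-associative loop, the relations $\simpp$ and $\simppm$ coincide (for an element of finite order every negative power is also a positive power), so that the induced subgraph on the set of torsion elements is literally the same in $\mathcal G^+(\mathbf G)$ and $\mathcal G^\pm(\mathbf G)$, and the whole difference between the two graphs is confined to the components made up of elements of infinite order. Moreover, since $e_{\mathbf G}\simpp x$ and $e_{\mathbf G}\simppm x$ for every torsion element $x$, all torsion elements together with $e_{\mathbf G}$ form a single component (or $e_{\mathbf G}$ is isolated, when $\mathbf G$ is torsion-free), so in either graph there is exactly one component consisting of elements of finite order.

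The first and hardest step is to give an isomorphism-invariant, purely graph-theoretic description of the infinite-order components. I would show that a connected component $\Phi$ of $\mathcal G^\pm(\mathbf G)$ consists of elements of infinite order if and only if $\Phi$ contains an infinite clique and every $\equiv_{\mathcal G^\pm}$-class contained in $\Phi$ has cardinality $2$, and that, correspondingly, a component $\Psi$ of $\mathcal G^+(\mathbf G)$ consists of elements of infinite order if and only if $\Psi$ contains an infinite clique and every $\equiv_{\mathcal G^+}$-class contained in $\Psi$ is a singleton. For an infinite-order $x$ the set $\{x^{n!}\mid n\in\mathbb N\}$ is an infinite clique, and a short argument with a large prime exponent shows that distinct infinite-order elements of one component of $\mathcal G^\pm(\mathbf G)$ share a closed neighborhood only when they are mutually inverse (giving classes of size exactly $2$), while in $\mathcal G^+(\mathbf G)$ they never do (giving singletons). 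The delicate direction is ruling out torsion components: here I would use that the cyclic subgroups generated by the vertices of a clique are totally ordered by inclusion, so an infinite clique forces elements of unbounded order; an element of order $n$ then lies in a class of size at least $\varphi(n)$, which exceeds $2$ as soon as $n>6$, so a torsion component carrying an infinite clique cannot have all its classes of size $2$, nor all of them singletons. This is exactly the place where the argument could go wrong, and it is where I would spend most of the effort.

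With the characterization in hand, any isomorphism between the two $N$-power graphs, or between the two $Z^\pm$-power graphs, must carry infinite-order components to infinite-order components and hence the unique finite-order component of one loop to that of the other. By Lemma~\ref{prva lema za nenula i pozitivno-stepeni graf} the map $x\mapsto x^{-1}$ is a fixed-point-free, isomorphism-type-preserving involution on the set of infinite-order components of $\mathcal G^+(\mathbf G)$, so these components fall into isomorphic inverse pairs; by Lemma~\ref{druga lema za nenula i pozitivno-stepeni graf} each such pair $\{\Psi,\Psi'\}$ corresponds to a single infinite-order component $\Psi\boxtimes P_2$ of $\mathcal G^\pm(\mathbf G)$, with $\Psi\cong\Phi/\mathord{\equiv_{\mathcal G^\pm}}$. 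Thus, if $\mathcal M(\mathbf G)$ denotes the multiset of isomorphism types obtained by taking one component from each inverse pair, then the infinite part of $\mathcal G^+(\mathbf G)$ is the disjoint union of two copies of each member of $\mathcal M(\mathbf G)$, the infinite part of $\mathcal G^\pm(\mathbf G)$ is the disjoint union of the graphs $\Psi\boxtimes P_2$ over $\Psi\in\mathcal M(\mathbf G)$, and both graphs share the one finite-order component.

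Finally I would close both implications by bookkeeping on these multisets. If $\mathcal G^\pm(\mathbf G)\cong\mathcal G^\pm(\mathbf H)$, then the finite-order components agree and, since $\Phi/\mathord{\equiv_{\mathcal G^\pm}}$ is recovered graph-theoretically, $\mathcal M(\mathbf G)=\mathcal M(\mathbf H)$; doubling each type reconstructs the infinite parts of the $N$-power graphs identically, whence $\mathcal G^+(\mathbf G)\cong\mathcal G^+(\mathbf H)$. Conversely, if $\mathcal G^+(\mathbf G)\cong\mathcal G^+(\mathbf H)$, then the multiset of isomorphism types of infinite-order components agrees; this multiset equals $\mathcal M(\mathbf G)$ with every multiplicity doubled, and since $2\kappa$ determines $\kappa$ for every cardinal $\kappa$ the halved multisets coincide, giving $\mathcal M(\mathbf G)=\mathcal M(\mathbf H)$, after which forming the products $\Psi\boxtimes P_2$ over these common types reconstructs $\mathcal G^\pm(\mathbf G)$ and $\mathcal G^\pm(\mathbf H)$ as isomorphic graphs. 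The only real subtlety beyond the characterization step is this cardinal-halving, which is harmless because a fixed-point-free involution forces even, or infinite and self-absorbing, multiplicities.
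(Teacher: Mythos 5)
Your proposal is correct and follows essentially the same route as the paper's own proof: you isolate the unique finite-order component, characterize the infinite-order components of $\mathcal G^+$ and $\mathcal G^\pm$ by the presence of an infinite clique together with all $\equiv$-classes being singletons (resp.\ of cardinality $2$), and then do the same multiset bookkeeping via Lemmas \ref{prva lema za nenula i pozitivno-stepeni graf} and \ref{druga lema za nenula i pozitivno-stepeni graf}, pairing inverse components and using that $2\kappa$ determines $\kappa$ for cardinals. In fact you supply more detail than the paper does at the one delicate point (ruling out the torsion component via the chain-of-cyclic-subgroups argument and $\varphi(n)>2$ for $n>6$), which the paper only asserts.
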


\begin{proof}
Let us denote graphs $\mathcal G^+(\mathbf G)$, $\mathcal G^\pm(\mathbf G)$, $\mathcal G^+(\mathbf H)$ and $\mathcal G^\pm(\mathbf H)$ with  $\Gamma^+$, $\Gamma^\pm$, $\Delta^+$ and $\Delta^\pm$, respectively. Let $G^{<\infty}$ and $H^{<\infty}$ be be the sets of all elements of finite order of loops $\mathbf G$ and $\mathbf H$.

Suppose that  $\mathcal G^+(\mathbf G)\cong\mathcal G^+(\mathbf H)$. Set $G^{<\infty}$ induced connected component $\Gamma^+$ and $\Gamma^\pm$. What is more, $G^{<\infty}$ induces the only connected component of $\Gamma^+$ which isn't union of  $\equiv_{\Gamma^+}$-classes of cardinality $1$ or which doesn't contain any infinite clique. Since similar holds for the connected component of $\Delta^+$ induced by $H^{<\infty}$, then $\Gamma^\pm[G^{<\infty}]=\Gamma^+[G^{<\infty}]\cong \Delta^+[H^{<\infty}]=\Delta^\pm[H^{<\infty}]$. Now, let $\Phi$ be a connected component $\Gamma^\pm$ containing elements of infinite order, and let $\kappa$ be the cardinality of the set of all connected components of $\Gamma^\pm$ isomorphic to $\Phi$. By Lemma \ref{druga lema za nenula i pozitivno-stepeni graf}, $\Phi$ is induced by vertices of two isomorphic connected components $\Psi_1$ and $\Psi_2$ of $\Gamma^+$, so the cardinality of the set of all connected components of $\Gamma^+$ isomorphic to $\Psi_1$ is $2\kappa$. Since $\Gamma^+\cong\Delta^+$, then $\Delta^+$ exactly $2\kappa$ connected components isomorphic to $\Psi_1$, so, by Lemma \ref{druga lema za nenula i pozitivno-stepeni graf}, graph $\Delta^\pm$ contains $\kappa$ connected components isomorphic to $\Phi$. Similar holds for each connected component of $\Delta^\pm$. Therefore, since $\Gamma^\pm$ and $\Delta^\pm$ have the same, up to isomorphism, connected components, and, for each connected component $\Phi$ of $\Gamma^\pm$, the cardinality of the set of all connected components components of $\Gamma^\pm$ isomorphic to $\Phi$ is equal to the cardinality of the set of all connected components of $\Delta^\pm$ isomorphic to $\Phi$, it follows that $\Gamma^\pm$ and $\Delta^\pm$ are isomorphic.

The other implication is proved in a similar manner. Let $\mathcal G^\pm(\mathbf G)\cong\mathcal G^\pm(\mathbf H)$. Now $G^{<\infty}$ is the only connected component of $\Gamma^\pm$ which isn't union of $\equiv_{\Gamma^\pm}$-classes of cardinality $2$ or doesn't contain any infinite clique, and similar holds for the connected component of $\Delta^\pm$ induced by $H^{<\infty}$. Therefore, $G^{<\infty}$ and $H^{<\infty}$ induce isomorphic connected components of $\Gamma^+$ and $\Delta^+$, respectively. Now, let $\Psi$ be a connected component of $\Gamma^+$ containing elements of infinite order. By Lemma \ref{prva lema za nenula i pozitivno-stepeni graf}, there exists a cardinal $\kappa$ such that the cardinality of the set of all connected components of $\Gamma^+$ isomorphic to $\Psi$ is equal to $2\kappa$. By Lemma \ref{druga lema za nenula i pozitivno-stepeni graf}, $\Gamma^\pm$ contains $\kappa$ connected components isomorphic to $\Psi\boxtimes P_2$, where $P_2$ is the path with $2$ vertices, so $\Delta^\pm$ contains $\kappa$ connected components isomorphic to $\Psi\boxtimes P_2$ too. Therefore, by Lemma \ref{druga lema za nenula i pozitivno-stepeni graf}, graph $\Delta^+$ contains $2\kappa$ connected components isomorphic to $\Psi$. Thus, $\Delta^+$ and $\Gamma^+$ contain, up to isomorphism, same connected components, and for each connected component $\Psi$ of $\Gamma^+$, the cardinality of the set of all connected components of $\Gamma^+$ isomorphic to $\Psi$ is equal to the set of all connected components of $\Delta^+$ isomorphic to $\Psi$. It follows that $\Gamma^+\cong\Delta^+$, which finishes our proof.
\end{proof}\\

The following corollary, which is the main result of this section, follows immediately from Theorem \ref{medjusobno odredjivanje opsteg i nenula-stepenog grafa} and Theorem \ref{medjusobno odredjedjivanje pozitivno-stepenog i nenula-stepenog grafa}.

\begin{corollary}\label{sve tri verzije daju isto}
Let $\mathbf G$ and $\mathbf H$ be power-associative loops. Then the following are equivalent:
\begin{enumerate}
\item $\mathbf G$ and $\mathbf H$ have isomorphic power graphs;
\item $\mathbf G$ and $\mathbf H$ have isomorphic $Z^\pm$-power graphs;
\item $\mathbf G$ and $\mathbf H$ have isomorphic $N$-power graphs.
\end{enumerate}
\end{corollary}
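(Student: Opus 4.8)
The plan is to read the three conditions as three graph isomorphisms and to connect them through a common middle term. Writing (1), (2), (3) for the isomorphisms $\mathcal G(\mathbf G)\cong\mathcal G(\mathbf H)$, $\mathcal G^\pm(\mathbf G)\cong\mathcal G^\pm(\mathbf H)$, and $\mathcal G^+(\mathbf G)\cong\mathcal G^+(\mathbf H)$ respectively, I observe that condition (2), the isomorphism of the $Z^\pm$-power graphs, is precisely the hinge that appears in both of the two theorems already proved in this section.

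First I would invoke Theorem \ref{medjusobno odredjivanje opsteg i nenula-stepenog grafa}, whose conclusion is exactly the biconditional (1) $\Leftrightarrow$ (2); this already settles the equivalence of the power graph and the $Z^\pm$-power graph as invariants of the loop. Next I would invoke Theorem \ref{medjusobno odredjedjivanje pozitivno-stepenog i nenula-stepenog grafa}, whose conclusion is the biconditional (3) $\Leftrightarrow$ (2). Chaining the two biconditionals through the shared condition (2) yields (1) $\Leftrightarrow$ (2) $\Leftrightarrow$ (3), which is the asserted equivalence of all three conditions.

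Since all of the loop-theoretic content --- the separation of finite-order from infinite-order elements, the connected-component analysis of each graph, and the strong-product description relating $\mathcal G^\pm$ to $\mathcal G^+$ via $P_2$ --- has already been absorbed into the two theorems, there is no genuine obstacle remaining. The only point requiring care is purely bookkeeping: to confirm that the middle condition is literally identical in both theorems (both refer to $\mathcal G^\pm$), so that the two biconditionals may be composed without any intermediate lemma. With that verified, the corollary follows immediately.
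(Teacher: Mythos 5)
Your proposal is correct and matches the paper exactly: the paper states that the corollary ``follows immediately from Theorem \ref{medjusobno odredjivanje opsteg i nenula-stepenog grafa} and Theorem \ref{medjusobno odredjedjivanje pozitivno-stepenog i nenula-stepenog grafa}'', i.e.\ precisely your chaining of the two biconditionals through the common condition on the $Z^\pm$-power graphs. Nothing further is needed.
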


\section{The power graph of a torsion-free group}
\label{torziono slobodne grupe}

In this section, for a graph $\Gamma$, with $S_{\Gamma}(x,y)$ we shall denote the set:
$$S_\Gamma(x,y)=\overline N_\Gamma(y)\setminus\overline N_\Gamma(x),$$
where $\overline N_\Gamma(x)$ denotes the closed neighborhood of $x$ in $\Gamma$. Also, for a group $\mathbf G$, with $S_{\mathbf G}(x,y)$ we will denote $S_{\mathcal G^\pm(\mathbf G)}(x,y)$.

\begin{lemma}[{\cite[Lemma 3.1]{cameron3}}]\label{samo su torzione sa jedinicnim}
Let $\mathbf G$ be a group. Then $\mathbf G$ is torsion-free if and only if $\mathcal G^\pm(\mathbf G)$ has an isolated vertex. Moreover, $\mathcal G^\pm(\mathbf G)$ has at most one isolated vertex.
\end{lemma}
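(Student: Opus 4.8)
The plan is to identify the identity $e_{\mathbf G}$ as the unique possible isolated vertex of $\mathcal G^\pm(\mathbf G)$ and to show that it is isolated exactly when $\mathbf G$ is torsion-free; this yields the claimed equivalence and the uniqueness statement simultaneously.

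First I would record the distinguished role of the identity in $\mathcal G^\pm(\mathbf G)$. Because $e_{\mathbf G}^n=e_{\mathbf G}$ for every $n\in\mathbb Z\setminus\{0\}$, the identity is never a nontrivial power of itself, so for any $x\neq e_{\mathbf G}$ the adjacency $e_{\mathbf G}\simppm x$ can arise only from $e_{\mathbf G}=x^n$ with $n\neq 0$, that is, only when $x$ has finite order. Hence $e_{\mathbf G}$ is adjacent in $\mathcal G^\pm(\mathbf G)$ precisely to the non-identity elements of finite order. In particular, if $\mathbf G$ is torsion-free there are no such elements, so $e_{\mathbf G}$ has empty neighbourhood and is isolated; this settles the forward implication.

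Next I would show that no vertex other than $e_{\mathbf G}$ can be isolated, which gives both the uniqueness statement and the reverse implication. Take $x\neq e_{\mathbf G}$. If $o(x)$ is finite, then $e_{\mathbf G}=x^{o(x)}$ with $o(x)\neq 0$, so $x\simppm e_{\mathbf G}$. If $o(x)$ is infinite, then $x^2\neq x$ (otherwise $x=e_{\mathbf G}$), and $x\simppm x^2$ through the exponent $2$. Either way $x$ has a neighbour, so the only candidate for an isolated vertex is $e_{\mathbf G}$, whence $\mathcal G^\pm(\mathbf G)$ has at most one isolated vertex.

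Finally, to complete the equivalence, suppose $\mathcal G^\pm(\mathbf G)$ possesses an isolated vertex. By the preceding step it must be $e_{\mathbf G}$, and by the first step the emptiness of its neighbourhood forces $\mathbf G$ to contain no non-identity element of finite order, i.e.\ $\mathbf G$ is torsion-free. No step here presents a genuine obstacle, as each follows directly from the defining adjacency condition of the $Z^\pm$-power graph; the only point demanding slight care is the infinite-order case, where one must invoke the exclusion of the exponent $0$ to guarantee that the edge $x\simppm x^2$ really is present.
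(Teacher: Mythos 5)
Your proof is correct and takes essentially the same route as the argument behind \cite[Lemma 3.1]{cameron3}, which the paper cites without reproving: the identity is adjacent in $\mathcal G^\pm(\mathbf G)$ exactly to the non-identity elements of finite order, while every element of infinite order is adjacent to its square, so $e_{\mathbf G}$ is the only candidate for an isolated vertex and is isolated precisely when $\mathbf G$ is torsion-free. Your care in checking $x^2\neq x$ and in excluding the exponent $0$ is exactly the right level of rigour; nothing further is needed.
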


\begin{lemma}[{\cite[Lemma 4.1]{cameron3}}]\label{kriterijum za ko koga tuce u torziono slobodnoj}
Let $\mathbf G$ be a torsion-free group, and let $x,y\in G$ be such that $x\not\in\{y,y^{-1}\}$. Then the following holds:
\begin{enumerate}
\item If $x\rightarrowpm y$, then $S_{\mathbf G}(y,x)$ is infinite;
\item If $\mathbf G$ is a group whose every non-identity element is contained in a unique maximal cyclic subgroup, and if $x\simppm y$, then
\begin{align*}
x\rightarrowpm y\text{ if and only if the set }S_{\mathbf G}(x,y)\text{ is finite.}
\end{align*}
\end{enumerate}
\end{lemma}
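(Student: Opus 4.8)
The plan is to treat the two parts separately and to derive the backward implication of part (2) directly from part (1). Throughout I would write $\overline N(z)$ for the closed neighborhood of $z$ in $\mathcal G^\pm(\mathbf G)$, and invoke Lemma \ref{samo su torzione sa jedinicnim} to the effect that $e$ is the unique isolated vertex, so it lies in no $\overline N(z)$ with $z\neq e$. The elementary fact that makes everything go, and which holds precisely because $\mathbf G$ is torsion-free, is that for nonzero integers $k,\ell$ one has $x^k\simppm x^\ell$ exactly when $k\mid\ell$ or $\ell\mid k$; this converts all the neighborhood computations below into divisibility statements about exponents.

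For part (1), suppose $x\rightarrowpm y$, so $y=x^n$ for some $n\in\mathbb Z\setminus\{0\}$, and since $x\notin\{y,y^{-1}\}$ necessarily $|n|\ge 2$. Every power $x^k$ with $k\neq 0$ lies in $\overline N(x)$, while by the divisibility criterion $x^k\in\overline N(y)$ forces $n\mid k$ or $k\mid n$. I would then exhibit infinitely many $k$ with $|k|>|n|$ and $n\nmid k$: for such $k$ the alternative $k\mid n$ fails automatically, since a divisor of the nonzero integer $n$ has absolute value at most $|n|$. Hence $x^k\in\overline N(x)\setminus\overline N(y)=S_{\mathbf G}(y,x)$, and as $x$ has infinite order these powers are distinct, so $S_{\mathbf G}(y,x)$ is infinite.

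For part (2), the backward implication is immediate from part (1) and needs no extra hypothesis: if $S_{\mathbf G}(x,y)$ is finite then, by the contrapositive of part (1) with the roles of $x$ and $y$ exchanged, $y\not\rightarrowpm x$; since $x\simppm y$ means $x\rightarrowpm y$ or $y\rightarrowpm x$, we get $x\rightarrowpm y$. The forward implication is where the uniqueness of maximal cyclic subgroups is used. Assuming $x\rightarrowpm y$, so $y=x^n$ with $|n|\ge 2$, both $x$ and $y$ are non-identity and lie in $\langle x\rangle$, hence in a common maximal cyclic subgroup $M=\langle g\rangle$; writing $x=g^a$, $y=g^b$ the relation $y=x^n$ gives $b=an$, so in particular $a\mid b$. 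The decisive step is the containment $\overline N(x)\cup\overline N(y)\subseteq M$: if $z\simppm y$ then either $z=y^m\in M$, or $y=z^m$, in which case $y\in\langle z\rangle$ lies in the maximal cyclic subgroup of $z$, which by uniqueness must coincide with $M$, so $z\in M$; the same argument applies to $x$. Identifying $M$ with $\mathbb Z$ via $g^c\mapsto c$, the neighborhood $\overline N(y)$ consists of the $g^d$ with $d\neq 0$ and $d\mid b$ or $b\mid d$; every multiple of $b$ is a multiple of $a$ (since $a\mid b$) and therefore already lies in $\overline N(x)$, so $S_{\mathbf G}(x,y)=\overline N(y)\setminus\overline N(x)$ is contained in the finite set of $g^d$ with $d\mid b$, hence is finite.

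The main obstacle is the forward implication of part (2), and concretely the containment $\overline N(x)\cup\overline N(y)\subseteq M$: without confining all neighbors to a single cyclic subgroup the exponent bookkeeping collapses, and it is exactly here that the hypothesis that each non-identity element lies in a unique maximal cyclic subgroup is needed, to dispose of neighbors $z$ arising from $y=z^m$ that might otherwise escape $M$. Everything else — the divisibility criterion for $\simppm$ and the comparison of multiples against divisors — is routine once this containment is secured.
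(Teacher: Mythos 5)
Your proposal is correct, and since the paper states this lemma without proof (it is quoted directly from \cite[Lemma 4.1]{cameron3}), the relevant comparison is with that cited source: your argument --- the divisibility criterion for adjacency among powers of a torsion-free element in part (1), the backward direction of (2) obtained as the contrapositive of (1) with $x$ and $y$ exchanged, and the forward direction via the containment $\overline N(x)\cup\overline N(y)\subseteq M$ for the unique maximal cyclic subgroup $M\cong\mathbb Z$, reducing $S_{\mathbf G}(x,y)$ to the finitely many divisor-exponents --- is essentially the argument given there.
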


The complement of a graph $\Gamma=(V,E)$ is the graph $\overline\Gamma=(V,V^{[2]}\setminus E)$. In this section, for an element $x$ of a group $\mathbf G$, with $I_{\mathbf G}(x)$, $O_{\mathbf G}(x)$, and $M_{\mathbf G}(x)$ we shall denote the set of all its direct predecessors without $x^{-1}$,  the set of all its direct successors without $x^{-1}$, and  the set of all its neighbors without $x^{-1}$ in $\vec{\mathcal G}^\pm(\mathbf G)$, respectively, i.e.
\begin{align*}
&I_{\mathbf G}(x)=\{y\in V\setminus\{x^{-1}\}\mid y\rightarrowpm_{\mathbf G} x\},\\
&O_{\mathbf G}(x)=\{y\in V\setminus\{x^{-1}\}\mid x\rightarrowpm_{\mathbf G} y\}\text{ i}\\
&M_{\mathbf G}(x)=I_{\mathbf G}(x)\cup O_{\mathbf G}(x).
\end{align*}
Sometimes we may denote $I_{\mathbf G}(x)$, $O_{\mathbf G}(x)$, and $M_{\mathbf G}(x)$ shortly with $I(x)$, $O(x)$, and $M(x)$, respectively. Further, for a group $\mathbf G$ and its $Z^\pm$-power graph $\Gamma^\pm=\mathcal G^\pm(\mathbf G)$, we introduce the following denotements:
\begin{align*}
&\mathcal I_{\mathbf G}(x)=\Gamma^\pm[I(x)],&&
&\mathcal O_{\mathbf G}(x)=\Gamma^\pm[O(x)],&&
&\mathcal M_{\mathbf G}(x)=\Gamma^\pm[M(x)],\\
&\overline{\mathcal I}_{\mathbf G}(x)=\overline{\Gamma^\pm}[I(x)],&&
&\overline{\mathcal O}_{\mathbf G}(x)=\overline{\Gamma^\pm}[O(x)],&&
&\overline{\mathcal M}_{\mathbf G}(x)=\overline{\Gamma^\pm}[M(x)],
\end{align*}
for the respective induced subgraphs of $\mathcal G^\pm(\mathbf G)$ or its complement. Sometimes we shall write shortly $\mathcal I(x)$, $\mathcal O(x)$, $\mathcal M(x)$, $\overline{\mathcal I}(x)$, $\overline{\mathcal O}(x)$, and $\overline{\mathcal M}(x)$. Note that, for a non-identity element $x$ of a torsion-free group $\mathbf G$, the element $x^{-1}$ is recognizable by the fact that it is the only vertex which has the same closed neighborhood in $\mathcal G^\pm(\mathbf G)$ as the vertex $x$.

\begin{lemma}[{\cite[Lemma 3.3]{cameron3}}]\label{O je komponenta povezanosti od N}
Let $\mathbf G$ be a torsion-free group and let $x$ be a non-identity element $\mathbf G$. Then $\overline{\mathcal O}_{\mathbf G}(x)$ is a connected component of $\overline{\mathcal M}_{\mathbf G}(x)$.
\end{lemma}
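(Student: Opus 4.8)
The plan is to split the statement into two claims and verify each. Writing $\Gamma^\pm=\mathcal G^\pm(\mathbf G)$, I will show (i) that $O(x)$ and $I(x)$ are disjoint, so that $M(x)=O(x)\sqcup I(x)$ and $M(x)\setminus O(x)=I(x)$, and that in $\overline{\Gamma^\pm}$ there is no edge between a vertex of $O(x)$ and a vertex of $I(x)$; and (ii) that $\overline{\mathcal O}_{\mathbf G}(x)=\overline{\Gamma^\pm}[O(x)]$ is connected. Claim (i) shows that the vertex set $O(x)$ is closed under taking neighbours in $\overline{\mathcal M}_{\mathbf G}(x)$, and then claim (ii) upgrades this to the assertion that $\overline{\mathcal O}_{\mathbf G}(x)$ is a single connected component of $\overline{\mathcal M}_{\mathbf G}(x)$.

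For the disjointness, note that if $y\in O(x)\cap I(x)$ then $y$ is a nonzero power of $x$ and $x$ is a nonzero power of $y$, so $\langle x\rangle=\langle y\rangle$ and, since $\mathbf G$ is torsion-free, $y\in\{x,x^{-1}\}$; but both are excluded from $O(x)$, a contradiction. The separation part of (i) is then a one-line computation: for $y=x^n\in O(x)$ (so $|n|\ge 2$) and $z\in I(x)$ with $x=z^m$ for some $m\neq 0$, one gets $y=x^n=z^{mn}$ with $mn\neq 0$, whence $z\rightarrowpm y$ and thus $y\simppm z$; so $y$ and $z$ are non-adjacent in $\overline{\Gamma^\pm}$.

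The connectivity in (ii) is the substantive step. Since $\mathbf G$ is torsion-free, $\langle x\rangle$ is infinite cyclic, $O(x)=\{x^n:|n|\ge 2\}$, and $x^n\simppm x^m$ holds exactly when $m\mid n$ or $n\mid m$; hence in the complement $x^n$ and $x^m$ are adjacent precisely when neither of $n,m$ divides the other. My approach is to produce a common hub. The vertices $x^p$ with $p$ prime form a clique of $\overline{\Gamma^\pm}[O(x)]$, because distinct primes are pairwise incomparable under divisibility. Moreover, for an arbitrary $x^n\in O(x)$ and any prime $p>|n|$ one has $p\nmid n$ and $n\nmid p$, so $x^n$ is adjacent in $\overline{\Gamma^\pm}$ to $x^p$. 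As there are infinitely many primes, every vertex of $O(x)$ attaches to this prime-clique, and connectivity follows at once.

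I expect the connectivity argument to be the main obstacle: the divisibility comparability pattern on $\{x^n:|n|\ge 2\}$ is irregular (for instance $x^2$ and $x^4$ are non-adjacent in the complement), and what makes the complement connected is precisely the observation that the prime powers $x^p$ form one clique to which every remaining vertex can be joined. Once (i) and (ii) are in hand, the conclusion that $\overline{\mathcal O}_{\mathbf G}(x)$ is a connected component of $\overline{\mathcal M}_{\mathbf G}(x)$ is immediate.
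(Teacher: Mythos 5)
Your proof is correct; note that the paper itself states this lemma without proof, importing it verbatim as Lemma 3.3 of \cite{cameron3}, so the comparison is with that source. Your argument is essentially the one given there: every $z\in I(x)$ satisfies $z\rightarrowpm y$ for all $y\in O(x)$ (via $y=x^n=z^{mn}$, exactly your computation), so the complement has no edges between $O(x)$ and $I(x)$, and a prime $p$ exceeding $\lvert n\rvert$ makes $x^p$ a complement-neighbour of $x^n$ --- your ``prime clique'' is just a packaged form of the large-prime hub used in \cite{cameron3} to connect any two vertices of $\overline{\mathcal O}_{\mathbf G}(x)$ by a path of length two.
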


\begin{lemma}[{\cite[Lemma 3.4]{cameron3}}]\label{susedstva su izomorfna}
Let $\mathbf G$ be a torsion-free group, and let $\mathbf H$ be a group such that $\mathcal G^\pm(\mathbf G)\cong\mathcal G^\pm(\mathbf H)$. Let $z$ be a non-identity element of $\mathbf G$, and let $\varphi: G\rightarrow H$ be an isomorphism from $\mathcal G^\pm(\mathbf G)$ to $\mathcal G^\pm(\mathbf H)$. Then $\hat\varphi=\varphi\rvert_{N(x)}$ is an isomorphism from $\overline{\mathcal M}_{\mathbf G}(z)$ to $\overline{\mathcal M}_{\mathbf H}\big(\hat\varphi(z)\big)$, and $\overline{\mathcal I}_{\mathbf G}(z)\cong\overline{\mathcal I}_{\mathbf H}\big(\hat\varphi(z)\big)$ and $\overline{\mathcal O}_{\mathbf G}(z)\cong\overline{\mathcal O}_{\mathbf H}\big(\hat\varphi(z)\big)$.
\end{lemma}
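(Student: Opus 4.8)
The plan is to push everything over to $\mathbf H$ using only that $\varphi$ is an isomorphism of the undirected graphs $\Gamma^\pm=\mathcal G^\pm(\mathbf G)$ and $\Delta^\pm=\mathcal G^\pm(\mathbf H)$, and then to read off the two component-level isomorphisms separately. First I would record that $\mathbf H$ is itself torsion-free: since $\Gamma^\pm$ has an isolated vertex by Lemma \ref{samo su torzione sa jedinicnim}, so does $\Delta^\pm\cong\Gamma^\pm$, and hence $\mathbf H$ is torsion-free by the same lemma. The isolated vertex is in each case the identity, so $\varphi(e_{\mathbf G})=e_{\mathbf H}$; as $z$ is not isolated (it is adjacent to $z^2$), $\varphi(z)$ is a non-identity element, hence of infinite order. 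Because $\varphi$ preserves closed neighborhoods and, in a torsion-free group, $x^{-1}$ is the unique vertex sharing the closed neighborhood of a non-identity element $x$ (the remark preceding Lemma \ref{O je komponenta povezanosti od N}), I obtain $\varphi(z^{-1})=\varphi(z)^{-1}$.

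Next I would derive the $\overline{\mathcal M}$-isomorphism. Since $\varphi$ carries the neighbors of $z$ bijectively onto the neighbors of $\varphi(z)$ and sends the single excluded vertex $z^{-1}$ to $\varphi(z)^{-1}$, it restricts to a bijection $M_{\mathbf G}(z)\to M_{\mathbf H}(\varphi(z))$. A graph isomorphism restricts to an isomorphism between induced subgraphs and therefore between their complements, so $\hat\varphi=\varphi\rvert_{N(z)}$, which agrees with $\varphi$ on $M(z)\subseteq N(z)$ and satisfies $\hat\varphi(z)=\varphi(z)$, restricts to an isomorphism $\overline{\mathcal M}_{\mathbf G}(z)\to\overline{\mathcal M}_{\mathbf H}(\hat\varphi(z))$. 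This settles the first assertion.

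For $\overline{\mathcal O}$ I would deliberately \emph{avoid} trying to show that $\hat\varphi$ respects the predecessor/successor split, which is exactly the place where directional information could be lost, and instead observe that $\overline{\mathcal O}_{\mathbf G}(z)$ does not depend on $\mathbf G$ at all. Indeed $O_{\mathbf G}(z)=\{z^n\mid |n|\ge 2\}$, and $z^a\simppm z^b$ holds precisely when $a\mid b$ or $b\mid a$; thus $\mathcal O_{\mathbf G}(z)$ is the fixed divisibility-comparability graph on $\{n\in\mathbb Z\mid |n|\ge 2\}$, determined only by $\langle z\rangle\cong(\mathbb Z,+)$. The identical description applies to $\varphi(z)$, which also has infinite order, so $\overline{\mathcal O}_{\mathbf G}(z)\cong\overline{\mathcal O}_{\mathbf H}(\hat\varphi(z))$ holds automatically, with no appeal to how $\hat\varphi$ acts.

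Finally, for $\overline{\mathcal I}$ I would invoke Lemma \ref{O je komponenta povezanosti od N}: $\overline{\mathcal O}_{\mathbf G}(z)$ is a connected component of $\overline{\mathcal M}_{\mathbf G}(z)$, and since $M_{\mathbf G}(z)=I_{\mathbf G}(z)\sqcup O_{\mathbf G}(z)$ (the two sets are disjoint because $x\in\langle z\rangle$ and $z\in\langle x\rangle$ force $x\in\{z,z^{-1}\}$), the remaining components of $\overline{\mathcal M}_{\mathbf G}(z)$ are exactly those of $\overline{\mathcal I}_{\mathbf G}(z)$; the same holds in $\mathbf H$. The isomorphism $\hat\varphi$ induces a type-preserving bijection of connected components, so the multisets of component-isomorphism-types of $\overline{\mathcal M}_{\mathbf G}(z)$ and $\overline{\mathcal M}_{\mathbf H}(\hat\varphi(z))$ coincide. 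Each multiset consists of one copy of the type of $\overline{\mathcal O}(\cdot)$ together with the component types of $\overline{\mathcal I}(\cdot)$; since the two $\overline{\mathcal O}$-types agree by the previous paragraph, cancelling one copy of that type from each side leaves equal multisets for $\overline{\mathcal I}_{\mathbf G}(z)$ and $\overline{\mathcal I}_{\mathbf H}(\hat\varphi(z))$, whence $\overline{\mathcal I}_{\mathbf G}(z)\cong\overline{\mathcal I}_{\mathbf H}(\hat\varphi(z))$. The step I expect to be the real obstacle is precisely this multiset cancellation: because $\varphi$ need not preserve edge direction, one cannot simply declare that $\hat\varphi$ sends the successor component to the successor component, and the argument is legitimate only thanks to the group-independent isomorphism type of $\overline{\mathcal O}$, which guarantees that the component removed from each side is genuinely the same.
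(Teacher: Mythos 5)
Your proof is correct, but note a point of context: this paper never proves the lemma itself --- it is imported verbatim from \cite[Lemma 3.4]{cameron3} --- so the only proof to compare against is the one in that source, and your argument is essentially a correct reconstruction of it: restrict $\varphi$ to the punctured neighborhood to get the $\overline{\mathcal M}$-isomorphism, use Lemma \ref{O je komponenta povezanosti od N} to see $\overline{\mathcal O}$ as a single connected component, and exploit the group-independent isomorphism type of $\mathcal O(z)$ to match up the remaining components, which are exactly those of $\overline{\mathcal I}$. Your preliminary normalizations are all needed and all correctly established: $\mathbf H$ is torsion-free via the isolated vertex (Lemma \ref{samo su torzione sa jedinicnim}), $\varphi(z)\neq e_{\mathbf H}$ since $z$ is adjacent to $z^2$, $\varphi(z^{-1})=\varphi(z)^{-1}$ via the uniqueness of the closed-neighborhood twin, and $M(z)=I(z)\sqcup O(z)$ because $z=x^n$ and $x=z^m$ force $mn=1$ in a torsion-free group. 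The divisibility description of $\mathcal O(z)$ is also right, and depends on the fact (worth stating explicitly) that the powers of an infinite-order element are pairwise distinct, so $n\mapsto z^n$ is a genuine bijection onto $O(z)$. The one step you leave implicit in your self-diagnosed ``obstacle'' is harmless but should be flagged: the multiset cancellation requires $\kappa+1=\lambda+1\Rightarrow\kappa=\lambda$ for component multiplicities that may be \emph{infinite} cardinals; this does hold (if either multiplicity is infinite, adjoining one copy leaves it unchanged, and a finite count can never equal an infinite one), and reassembling a global isomorphism from matched components uses the axiom of choice, as usual for such arguments. With those two small remarks made explicit, the proof is complete, and it correctly isolates why one cannot simply claim $\hat\varphi$ maps $O_{\mathbf G}(z)$ onto $O_{\mathbf H}(\varphi(z))$ --- a failure the paper itself confirms later via Lemma \ref{reciprocni prave izomorfizam}.
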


By the previous lemma, for an element $z$ of a torsion-free group $\mathbf G$ and for $\varphi\in\Is\big(\mathcal G^\pm(\mathbf G),\mathcal G^\pm(\mathbf H)\big)$, where $\mathbf H$ is any group, holds $\mathcal O_{\mathbf G}(z)\cong\mathcal O_{\mathbf H}\big(\varphi(z)\big)$ and $\mathcal I_{\mathbf G}(z)\cong\mathcal I_{\mathbf H}\big(\varphi(z)\big)$. However, an isomorphism from $\mathcal G^\pm(\mathbf G)$ might not map $O_{\mathbf G}(z)$ onto $O_{\mathbf H}\big(\varphi(z)\big)$, which is going to be confirmed by Lemma \ref{reciprocni prave izomorfizam}.

The {\bf transposed digraph} of a directed graph $\vec\Gamma$ is the digraph ${\vec\Gamma}^T$ such that $x\rightarrow_{\vec\Gamma^T} y$ if $y\rightarrow_{\vec\Gamma}x$. For digraphs $\vec\Gamma=(V_1,E_1)$ and $\vec\Delta=(V_2,E_2)$, bijection $\varphi:V_1\rightarrow V_2$ is an {\bf anti-isomorphism} from $\vec\Gamma$ to $\vec\Delta$ if $\varphi$ is an isomorphism from $\vec\Gamma$ to $\vec\Delta^T$, and if such bijection exists, then we shall say that $\vec\Gamma$ and $\vec\Delta$ are anti-isomorphic.

\begin{lemma}[{\cite[Lemma 6.1]{cameron3}}]\label{reciprocni prave izomorfizam}
Let $a\in\mathbb Q\setminus\{0\}$, and let $\varphi_a:\mathbb Q\rightarrow\mathbb Q$ be defined as: 
$$\varphi(x)=\begin{cases}
0,&x= 0 \vspace{1mm}\\
\displaystyle\frac{a^2}x,&x\neq 0.
\end{cases}$$
Then $\varphi_a$ is an automorphism of $\mathcal G^\pm(\mathbb Q)$ and an isomorphism from $\vec{\mathcal G}(\mathbb Q)$ to $\big(\vec{\mathcal G}(\mathbb Q)\big)^T$. What is more, mapping $\varphi_a$ is an isomorphism from $\mathcal I_{\mathbb Q}(a)$ to $\mathcal O_{\mathbb Q}(a)$.
\end{lemma}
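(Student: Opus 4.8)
The plan is to reduce all three assertions to two elementary facts about $\varphi_a$: that it is an involution and that it inverts ratios. First I would record that $\varphi_a$ is a bijection, since it fixes $0$ and, for $x\neq 0$, satisfies $\varphi_a(\varphi_a(x))=a^2/(a^2/x)=x$; thus $\varphi_a\circ\varphi_a=\mathrm{id}_{\mathbb Q}$ and $\varphi_a$ is its own inverse. I would also flag the two fixed points that make the lemma work, namely $\varphi_a(0)=0$ and $\varphi_a(a)=a^2/a=a$ (this is precisely why the numerator is $a^2$ and not an arbitrary constant). The engine of the whole argument is the identity, valid for all nonzero $x,y$,
\[
\frac{\varphi_a(y)}{\varphi_a(x)}=\frac{a^2/y}{a^2/x}=\frac{x}{y},
\]
from which each claim can be read off directly.

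For the automorphism claim I would use that, for nonzero $x,y$, one has $x\simppm y$ in $\mathcal G^\pm(\mathbb Q)$ if and only if $y/x\in\mathbb Z$ or $x/y\in\mathbb Z$, a condition invariant under interchanging $x/y$ and $y/x$. By the displayed identity, $\varphi_a(x)\simppm\varphi_a(y)$ holds if and only if $x/y\in\mathbb Z$ or $y/x\in\mathbb Z$, which is the identical condition; and since $0$ is the unique isolated vertex of $\mathcal G^\pm(\mathbb Q)$ by Lemma \ref{samo su torzione sa jedinicnim} and is fixed by $\varphi_a$, it causes no difficulty. Hence $\varphi_a$ preserves adjacency and non-adjacency, so it is an automorphism of $\mathcal G^\pm(\mathbb Q)$.

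For the directed claim the key point is that $\varphi_a$ reverses the power relation. For nonzero $x,y$ with $x\neq y$, the edge $x\rightarrowpm y$ means $y/x\in\mathbb Z$, which by the displayed identity is equivalent to $\varphi_a(x)/\varphi_a(y)\in\mathbb Z$, i.e.\ to $\varphi_a(y)\rightarrowpm\varphi_a(x)$; this is exactly a directed edge from $\varphi_a(x)$ to $\varphi_a(y)$ in the transposed digraph. So $\varphi_a$ sends each directed edge among the nonzero elements to the reversed edge. The step I expect to require the most care is the treatment of the identity vertex, since this is the only place where the transpose could behave asymmetrically: I would verify directly, using $\varphi_a(0)=0$, that reversing the edges among the nonzero elements is enough and that no edge at $0$ is spuriously created or destroyed. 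This is the main obstacle of the proof, and it is precisely the delicate bookkeeping at the identity that must be done honestly.

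Finally, for the neighbourhood claim I would compute $I(a)$ and $O(a)$ explicitly. In $\mathbb Q$ the inverse of $a$ is $-a$, and $y\rightarrowpm a$ means $a=ny$ for some nonzero integer $n$; after discarding $y=a$ (the forbidden loop) and $y=-a$ (the excluded inverse) one obtains $I(a)=\{a/n : n\in\mathbb Z,\ \lvert n\rvert\ge 2\}$, and symmetrically $O(a)=\{na : n\in\mathbb Z,\ \lvert n\rvert\ge 2\}$. The displayed identity gives $\varphi_a(a/n)=na$, so $\varphi_a$ carries $I(a)$ bijectively onto $O(a)$. Since $\varphi_a$ is already an automorphism of $\mathcal G^\pm(\mathbb Q)$ by the first claim, its restriction is an isomorphism of the induced subgraphs $\mathcal I_{\mathbb Q}(a)=\mathcal G^\pm(\mathbb Q)[I(a)]$ and $\mathcal O_{\mathbb Q}(a)=\mathcal G^\pm(\mathbb Q)[O(a)]$, which is the remaining assertion.
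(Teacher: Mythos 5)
Your computation is correct, and since the paper states this lemma only by citation to Cameron--Guerra--Jurina (no proof is reproduced in the text), your direct verification --- the involution property, the ratio-inverting identity $\varphi_a(y)/\varphi_a(x)=x/y$, and the explicit description $I(a)=\{a/n:\lvert n\rvert\ge 2\}$, $O(a)=\{na:\lvert n\rvert\ge 2\}$ with $\varphi_a(a/n)=na$ --- is exactly the standard argument, and all three reductions go through as you describe.

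The one step you left as a promissory note, the ``delicate bookkeeping at the identity,'' deserves to be discharged explicitly, because it is the single point where the conventions of Section \ref{sekcija o raznim definicijama} genuinely matter. Under the $Z^\pm$ convention, which is the one in force here (the other two claims concern $\mathcal G^\pm$, and the source paper uses nonzero exponents throughout), the vertex $0$ is isolated in the digraph as well: $y\rightarrowpm 0$ would force $0=ny$ with $n\neq 0$, hence $y=0$, and $0\rightarrowpm y$ would force $y=n\cdot 0=0$, both excluded since edges join distinct vertices. So $\varphi_a(0)=0$ settles the matter in one line, and your reversal of edges among nonzero elements is the whole proof. By contrast, if one reads the symbol $\vec{\mathcal G}$ in the statement literally according to Definition \ref{definicije pridruzenih grafova}, where $n=0$ is permitted, the anti-isomorphism claim is actually \emph{false}: taking $n=0$ gives $x\rightarrow 0$ for every $x\neq 0$, so $0$ has full in-degree and zero out-degree in $\vec{\mathcal G}(\mathbb Q)$, and since $\varphi_a$ fixes $0$, no such map can carry $\vec{\mathcal G}(\mathbb Q)$ onto its transpose. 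Hence the occurrence of $\vec{\mathcal G}$ in the statement must be read as $\vec{\mathcal G}^\pm$ (a notational slip inherited from the differing conventions of \cite{cameron3}); with that reading, which is the one your proof implicitly adopts by writing $\rightarrowpm$, your argument is complete.
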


Before we move on with the proof of the next lemma, let us introduce a well known theorem about locally cyclic groups. Its proof can be found in \cite[Glava VIII, Sekcija 30]{kurosh}.

\begin{theorem}\label{sve torziono slobodne lokalno ciklicne su u q}
A torsion-free group is locally cyclic if and only if it is isomorphic to a subgroup of the group of rational numbers.
\end{theorem}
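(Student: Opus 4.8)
The plan is to prove both directions, with the forward implication (locally cyclic $\Rightarrow$ embeddable in $\mathbb{Q}$) being the substantial one. For the easy direction I would first note that any subgroup of $(\mathbb{Q},+)$ is automatically torsion-free, and that it is locally cyclic: finitely many rationals $p_1/q_1,\dots,p_k/q_k$ all lie in the infinite cyclic group $\frac{1}{q_1\cdots q_k}\mathbb{Z}$, so the subgroup they generate, being a subgroup of an infinite cyclic group, is itself cyclic. This settles the ``if'' direction at once.

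For the converse, suppose $\mathbf{G}$ is torsion-free and locally cyclic. I would first observe that $\mathbf{G}$ is abelian, since any two elements $x,y$ lie in the cyclic, hence abelian, subgroup $\langle x,y\rangle$. If $G$ is trivial the claim is immediate, so fix a non-identity element $a$. The key construction is a map $\varphi\colon G\rightarrow\mathbb{Q}$ defined as follows: for $g\in G$ the subgroup $\langle a,g\rangle$ is cyclic, say $\langle a,g\rangle=\langle c\rangle$, and since $\mathbf{G}$ is torsion-free this is infinite cyclic, so I may write $a=mc$ and $g=nc$ with $m,n\in\mathbb{Z}$ and $m\neq 0$ (as $a\neq e$). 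I then set $\varphi(g)=n/m$.

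The heart of the argument is showing that $\varphi$ is a well-defined homomorphism, and the unifying idea is that any finite set of elements of $\mathbf{G}$ lies inside one common infinite cyclic subgroup. For well-definedness, the only ambiguity is the choice of generator $c$ of $\langle a,g\rangle$; in an infinite cyclic group the two generators are $\pm c$, and replacing $c$ by $-c$ replaces $(m,n)$ by $(-m,-n)$, leaving the ratio $n/m$ unchanged. To check additivity, given $g,h$ I would pass to the cyclic subgroup $\langle a,g,h\rangle=\langle d\rangle$ and write $a=pd$, $g=qd$, $h=rd$. Since $\langle a,g\rangle\subseteq\langle d\rangle$ forces $c=sd$ and hence $n/m=q/p$, the value $\varphi(g)$ computed inside $\langle d\rangle$ agrees with its original definition; then $g+h=(q+r)d$ gives $\varphi(g+h)=(q+r)/p=\varphi(g)+\varphi(h)$.

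Finally, injectivity is immediate: if $\varphi(g)=0$ then $n=0$, so $g=nc=e$, whence $\varphi$ has trivial kernel and embeds $\mathbf{G}$ into $(\mathbb{Q},+)$. I expect the main obstacle to be organizing the well-definedness and homomorphism verifications cleanly; both hinge on the single observation that finitely many elements always sit inside one infinite cyclic subgroup, which reduces every identity to a routine computation with integer coefficients, where torsion-freeness is exactly what guarantees those coefficients are unique.
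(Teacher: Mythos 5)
Your proof is correct, and it is worth noting that the paper itself contains no proof of this statement: the theorem is quoted as a classical fact, with the proof deferred to Kurosh's \emph{The Theory of Groups} (Chapter VIII, Section 30), where it emerges from the general theory of torsion-free abelian groups of rank $1$. Your argument is the standard direct proof of that classical result, and every step checks out: the ``if'' direction via common denominators (finitely many rationals lie in $\frac{1}{q_1\cdots q_k}\mathbb{Z}$, and subgroups of cyclic groups are cyclic); abelianness of $\mathbf G$ from local cyclicity applied to $\langle x,y\rangle$; well-definedness of $\varphi(g)=n/m$, where the only ambiguity is the sign of the generator $c$ (which flips both coordinates of $(m,n)$) and where torsion-freeness gives uniqueness of $m,n$ once $c$ is fixed; additivity by passing to the larger cyclic subgroup $\langle a,g,h\rangle=\langle d\rangle$ and checking compatibility via $c=sd$; and injectivity from $\varphi(g)=0\Rightarrow n=0\Rightarrow g=e$. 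Conceptually, your map $\varphi$ is the normalized embedding $g\mapsto g\otimes 1$ of $\mathbf G$ into $G\otimes_{\mathbb Z}\mathbb{Q}$, which is one-dimensional precisely because any two elements are commensurable inside a common infinite cyclic subgroup; this is essentially the same mechanism as Kurosh's rank-$1$ argument, but your self-contained ratio computation has the advantage of requiring no machinery beyond the definition of a locally cyclic group, and could serve verbatim as an inline replacement for the citation.
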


\begin{lemma}\label{q je jedina sa izomorfnim ulaznim i izlaznim}
Let $\mathbf G$ be a torsion-free locally cyclic group such that $\mathcal O_{\mathbf G}(x)\cong\mathcal I_{\mathbf G}(x)$ for all $x\in G$. Then $\mathbf G\cong(\mathbb Q,+)$.  
\end{lemma}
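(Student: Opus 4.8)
The plan is to reduce everything to a combinatorial statement about divisibility graphs. First I would invoke Theorem~\ref{sve torziono slobodne lokalno ciklicne su u q} to assume $\mathbf G$ is a subgroup of $(\mathbb Q,+)$, and, after multiplying by $1/g$ for some $0\neq g\in G$ (the trivial group being excluded), that $1\in G$; then $\mathbb Z\subseteq G$ and $G$ is determined by its height sequence $(h_p)_p$, where $h_p=\sup\{k:1/p^k\in G\}\in\{0,1,\dots,\infty\}$, and $\mathbf G\cong(\mathbb Q,+)$ exactly when every $h_p=\infty$. Since a subgroup of $\mathbb Q$ containing $1$ equals $\mathbb Q$ iff it is divisible, it suffices to produce, for each $\mathbf G\not\cong(\mathbb Q,+)$, one element $x$ with $\mathcal O_{\mathbf G}(x)\not\cong\mathcal I_{\mathbf G}(x)$. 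Writing the group additively and recalling that $y\rightarrowpm x$ means $x=ny$ for some $n\in\mathbb Z\setminus\{0\}$, for $x\neq 0$ one checks $O(x)=\{nx:|n|\ge2\}$ and $I(x)=\{x/n:|n|\ge2,\ x/n\in G\}$, with two such vertices adjacent exactly when their integer indices are related by divisibility. Hence, for every $x\neq 0$, $\mathcal O_{\mathbf G}(x)$ is isomorphic to one fixed graph $\mathcal D$, the divisibility comparability graph on $\{n\in\mathbb Z:|n|\ge2\}$, whereas $\mathcal I_{\mathbf G}(x)\cong\mathcal D[S_x]$ with $S_x=\{n:|n|\ge2,\ x/n\in G\}$ symmetric and divisor-closed (if $n\in S_x$ and $d\mid n$, $|d|\ge2$, then $d\in S_x$).

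Next I would choose the witness. Assuming $\mathbf G\not\cong(\mathbb Q,+)$, fix a prime $p$ with $h_p<\infty$ and set $x=p^{h_p+2}\in\mathbb Z\subseteq G$. A valuation computation gives $S_x=\{n:|n|\ge2,\ v_p(n)\le M\text{ and }v_\ell(n)\le h_\ell\text{ for }\ell\neq p\}$ with $M=2h_p+2\ge2$; thus $S_x$ contains the prime $p$ together with exactly the proper powers $p^2,\dots,p^{M}$, and no higher power of $p$. If $S_x$ is finite then $\mathcal I_{\mathbf G}(x)$ is finite while $\mathcal O_{\mathbf G}(x)\cong\mathcal D$ is infinite and we are done; otherwise $S_x$ is infinite, forcing some prime $q\neq p$ to have positive height. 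The key point is that in $\mathcal D[S_x]$ the prime $p$ carries \emph{finitely many, but at least one}, of its proper powers, whereas in $\mathcal D$ every prime carries infinitely many.

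The heart of the proof is to convert this arithmetic discrepancy into an isomorphism invariant. Since $n$ and $-n$ have equal closed neighbourhoods, I first pass to the quotient by the relation ``equal closed neighbourhood'' (respected by isomorphisms), turning $\mathcal D$ into the divisibility comparability graph $\mathcal D^+$ on integers $\ge2$ and $\mathcal D[S_x]$ into $\mathcal D^+[S_x^+]$. For a vertex $v$ let $u(v)$ be the number of vertices of $N(v)$ adjacent to every other vertex of $N(v)$; this is isomorphism-invariant. In $\mathcal D^+$ one computes $u(v)=k-1$ when $v=p^k$ and $u(v)=0$ otherwise, so the vertices with $u(v)\ge1$ are the proper prime powers, and the base prime $p$ of $v=p^k$ is recovered as the unique universal vertex of $N(v)$ with $u=0$. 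Declaring the \emph{power-chain} of a prime-like vertex $w$ to be $\{v:u(v)\ge1,\ \mathrm{base}(v)=w\}$, in $\mathcal D^+$ every base prime has an infinite power-chain, so the property ``some base prime has a finite, nonempty power-chain'' is \emph{false} in $\mathcal D^+$; by the previous paragraph it is \emph{true} in $\mathcal D^+[S_x^+]$, witnessed by $p$ with power-chain $\{p^2,\dots,p^M\}$. Being isomorphism-invariant, this yields $\mathcal O_{\mathbf G}(x)\not\cong\mathcal I_{\mathbf G}(x)$, the desired contradiction, whence $h_p=\infty$ for all $p$ and $\mathbf G\cong(\mathbb Q,+)$.

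The step I expect to be the main obstacle is verifying that this recognition scheme survives the passage to the \emph{induced} subgraph $\mathcal D^+[S_x^+]$, where neighbourhoods shrink and a vertex could spuriously become universal. Concretely one must confirm $u_{S_x}(p)=0$ — guaranteed because $S_x$ is infinite, so $N(p)$ contains two incomparable vertices (for instance $pq$ and $pq'$ for two primes of positive height, or $p^2q$ and $pq^2$ for a prime $q$ of infinite height, using $M\ge2$) — and that for $2\le k\le M$ one still has $u_{S_x}(p^k)\ge1$ with base $p$, while no vertex outside $\{p^2,\dots,p^M\}$ acquires base $p$. These are elementary but genuinely case-sensitive valuation checks, and phrasing the invariant so that it is robust against every possible shape of $S_x$ is the delicate part of the argument.
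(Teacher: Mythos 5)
Your proposal is correct in outline, but it takes a genuinely different route from the paper. The paper works at the single vertex $1$: after normalizing the embedding $\mathbf G<(\mathbb Q,+)$ so that $\frac 1p\in G$ for a prime $p$ of finite height, it uses the finiteness criterion for the sets $S_\Gamma(x,y)=\overline N_\Gamma(y)\setminus\overline N_\Gamma(x)$ (in the spirit of Lemma~\ref{kriterijum za ko koga tuce u torziono slobodnoj}) to \emph{recover the divisibility preorder} $\preceq$ on $O_{\mathbf G}(1)$ and $I_{\mathbf G}(1)$ from the undirected graphs, so that any isomorphism $\mathcal O_{\mathbf G}(1)\to\mathcal I_{\mathbf G}(1)$ is an order isomorphism; the contradiction then comes from an order invariant — every minimal class of $(O_{\mathbf G}(1),\preceq)$ lies on an infinite ascending chain whose downward closure is totally ordered, while the class of $\frac 1p$ in $(I_{\mathbf G}(1),\preceq)$ does not, since the powers of $\frac 1p$ in $G$ terminate at $\frac 1{p^m}$. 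You never reconstruct the direction: you identify $\mathcal O_{\mathbf G}(x)$ and $\mathcal I_{\mathbf G}(x)$ as explicit divisibility-comparability graphs $\mathcal D$ and $\mathcal D[S_x]$ and separate them by a purely local undirected invariant (counts $u(v)$ of universal vertices in neighborhoods, used to recognize proper prime powers, their base primes, and power-chains). Your choice $x=p^{h_p+2}$ is a nice touch: it forces $M=2h_p+2\ge 2$, so the witness works even when $h_p=0$, eliminating the paper's ``without loss of generality $\frac 1p\in G$'' normalization. What each approach buys: the paper's order-recovery makes the final invariant transparent and the verification short, at the cost of relying on the direction-recovery criterion; yours is more elementary and self-contained, at the cost of the case-sensitive checks in the induced subgraph that you yourself flag.

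Two of those checks deserve more than the justification you give. First, your stated reason for $u_{S_x}(p)=0$ — that $N(p)$ contains two incomparable vertices — does not suffice: $u(p)=0$ requires that \emph{no} vertex of $N(p)$ is adjacent to all the others, so every candidate must be excluded. This is true but needs the full argument: a universal $w\in N(p)$ must be comparable with $pq^t$ for all admissible $t$ (or with $pq$ for infinitely many primes $q$ of positive height), forcing $w$ to divide such elements, hence $w=pq^b$, which is then incomparable with $p^2\in N(p)$ (here $M\ge 2$ is used). Second, your identification of the closed-neighbourhood quotient of $\mathcal D[S_x]$ with $\mathcal D^+[S_x^+]$ is not automatic in an induced subgraph: for finite $S_x$ it can genuinely fail (e.g.\ $S_x^+=\{2,4\}$ identifies $2$ and $4$), so one must prove that infiniteness of $S_x$ prevents extra identifications — true, by exhibiting for each pair $n, kn\in S_x^+$ a witness such as $nr$ for a suitable prime $r$, but again a case split on the shape of $S_x$ (infinitely many primes of positive height versus a single prime of infinite height). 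Likewise, in degenerate shapes of $S_x$ where the multiples of $p^M$ form a chain, extra universal vertices with $u\ge 1$ do appear in $N(p^M)$, so the uniqueness of the $u=0$ universal vertex must be checked there too; it holds, but only after verifying $u(p^Mq^b)\ge 1$ via the vertex $p^Mq^{b+1}$. None of these is a fatal gap — every claim you make is correct and provable — but as written the proof is a program whose most delicate steps are announced rather than executed.
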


\begin{proof}
Suppose that $\mathbf G\not\cong(\mathbb Q,+)$ and that $\mathcal I_{\mathbf G}(x)\cong\mathcal O_{\mathbf G}(x)$ for all $x\in G$. By Theorem \ref{sve torziono slobodne lokalno ciklicne su u q}, any torsion-free locally cyclic group can be embedded into the group of rational numbers. Notice that $\mathbb Q$ is isomorphic to none of its proper subgroups, because, for every prime $p$, every equation $px=a$ with the unknown $x$ has a solution in $\mathbb Q$. Beside that, every subgroup of the group of rationals is isomorphic to a subgroup of $\mathbb Q$ which contains $1$. Therefore, without loss of generality, one can assume that $\mathbf G<(\mathbb Q,+)$, and that $1\in G$. One can also notice that there is a prime $p$ such that $\frac 1{p^k}\not\in G$ for some $k\in\mathbb N$, and, without loss of generality, we can assume that $\frac 1p\in G$. 

Let $m$ be the maximal natural number such that $\frac 1{p^{m}}\in G$. Let $\preceq$ be the preorder on $O_{\mathbf G}(1)$ such that $x\preceq y$ if $x\rightarrow y$ or $x=y$. Let us introduce the preorder $\preceq$ on $I_{\mathbf G}(1)$ such that $x\preceq y$ if $y\rightarrow x$ or $x=y$. One can easily notice that classes of the preordered set $(O_{\mathbf G}(1),\preceq)$ are two-element sets $\{n,-n\}$, where $n\in\mathbb N\setminus\{1\}$, and that classes of the preordered set $(I_{\mathbf G}(1),\preceq)$ are two-element sets of form $\{\frac 1n,-\frac 1n\}$, for $n\in\mathbb N\setminus\{1\}$. It is not hard to notice that minimal classes of $(O_{\mathbf G}(1),\preceq)$ are the ones containing prime numbers, and that minimal classes of $(I_{\mathbf G}(1),\preceq)$ are the ones containing the multiplicative inverses of prime numbers.

Notice that, for all $x,y\in O_{\mathbf G}(1)$ adjacent in $\mathcal G^\pm(\mathbf G)$, holds $x\rightarrow y$ if and only if $S_{\mathcal O_{\mathbf G}(1)}(x,y)$ is finite set. Also, for $x,y\in I_{\mathbf G}(1)$ holds $y\rightarrow x$ if and only if the set $S_{\mathcal I_{\mathbf G}(1)}(x,y)$ is finite. Let $\varphi: O_{\mathbf G}(1)\rightarrow I_{\mathbf G}(1)$ be an isomorphism from $\mathcal O_{\mathbf G}(1)$ to $\mathcal I_{\mathbf G}(1)$. Based on previously said, $\varphi$ is an isomorphism from $(O_{\mathbf G}(1),\preceq)$ to $(I_{\mathbf G}(1),\preceq)$. Notice that in $(O_{\mathbf G}(1),\preceq)$ each element of any minimal class is contained in some infinite ascending chain $L$ such that every two elements of $L^\downarrow$ are comparable. Namely, if $x$ is an element of a minimal class of $(O_{\mathbf G}(1),\preceq)$, then $x$ or $x^{-1}$ is a prime. Then the sequence $x,x^2,x^3,\dots$ makes up an ascending chain $L$, and, for each $y\in L^\downarrow$, $y$ or $-y$ is a power of the prime $\lvert x\rvert$. Since $(O_{\mathbf G}(1),\preceq)\cong(I_{\mathbf G}(1),\preceq)$, it follows that the same holds for $(I_{\mathbf G}(1),\preceq)$. However, $\frac 1p$ is contained in no such infinite ascending chain of $(I_{\mathbf G}(1),\preceq)$, which is a contradiction with the supposition that $\mathcal O_{\mathbf G}(x)\cong\mathcal I_{\mathbf G}(x)$ for all $x\in G$. This proves the lemma.
\end{proof}

\begin{lemma}\label{pomocna za lokalno ciklicne}
Let $\mathbf G$ be a torsion-free group, and let $x,y,z\in G$ be such that $x,y\in O_{\mathbf G}(z)$, $x\rightarrowpm_{\mathbf G}y$, and $y\not\in\{x,x^{-1}\}$. Let $\mathbf H$ be a group, and let $\varphi: G\rightarrow H$ be an isomorphism from $\mathcal G^\pm(\mathbf G)$ to $\mathcal G^\pm(\mathbf H)$. Then
\begin{center}
$\varphi(x)\rightarrowpm_{\mathbf H}\varphi(y)$ if and only if $\varphi\big(O_{\mathbf G}(z)\big)= O_{\mathbf H}\big(\varphi(z)\big)$.
\end{center}
\end{lemma}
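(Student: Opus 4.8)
The plan is to reduce the statement to a dichotomy coming from the connected components of $\overline{\mathcal M}$, and then to detect the direction of the edge $x\rightarrowpm y$ inside each of the two possible component types by a finiteness criterion for the sets $S$. First I would record the easy preliminaries. Since $x,y\in O_{\mathbf G}(z)$, the set $O_{\mathbf G}(z)$ is non-empty, so $z$ is not the identity and $\varphi(z)$ is not the identity of $\mathbf H$; moreover, by Lemma \ref{samo su torzione sa jedinicnim}, $\mathbf H$ is torsion-free as well, since having an isolated vertex is preserved by $\varphi$. As $z\equiv_{\mathbf G}z^{-1}$ and inverses are the unique vertices sharing a closed neighbourhood, one gets $\varphi(z^{-1})=\varphi(z)^{-1}$, hence $\varphi\big(M_{\mathbf G}(z)\big)=M_{\mathbf H}\big(\varphi(z)\big)$, and by Lemma \ref{susedstva su izomorfna} the restriction $\hat\varphi=\varphi\rvert_{N(z)}$ is an isomorphism from $\overline{\mathcal M}_{\mathbf G}(z)$ to $\overline{\mathcal M}_{\mathbf H}\big(\varphi(z)\big)$.

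Next I establish the dichotomy. Because every element of $O_{\mathbf G}(z)$ is a power of $z$ and every element of $I_{\mathbf G}(z)$ is a root of $z$, any $u\in O_{\mathbf G}(z)$ is a power of any $s\in I_{\mathbf G}(z)$, so $O_{\mathbf G}(z)$ and $I_{\mathbf G}(z)$ are completely joined in $\mathcal G^\pm(\mathbf G)$ and hence have no edges between them in the complement. Thus $\overline{\mathcal M}_{\mathbf G}(z)$ is the disjoint union of $\overline{\mathcal O}_{\mathbf G}(z)$ and $\overline{\mathcal I}_{\mathbf G}(z)$, and by Lemma \ref{O je komponenta povezanosti od N} the first summand is a single connected component; the same holds in $\mathbf H$. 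Consequently $\hat\varphi$ carries the component $\overline{\mathcal O}_{\mathbf G}(z)$ onto one connected component of $\overline{\mathcal M}_{\mathbf H}\big(\varphi(z)\big)$, and since $\overline{\mathcal O}_{\mathbf H}\big(\varphi(z)\big)$ is the only component lying inside $O_{\mathbf H}\big(\varphi(z)\big)$, exactly one of the following holds: (A) $\varphi\big(O_{\mathbf G}(z)\big)=O_{\mathbf H}\big(\varphi(z)\big)$, or (B) $\varphi\big(O_{\mathbf G}(z)\big)\subseteq I_{\mathbf H}\big(\varphi(z)\big)$. The right-hand side of the asserted equivalence is precisely case (A), so it suffices to prove that the edge is preserved in case (A) and reversed in case (B).

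The technical heart is the following intrinsic criterion, which I would verify by a direct divisibility computation on $O_{\mathbf G}(z)=\{z^n:\lvert n\rvert\ge 2\}$: for adjacent $u,v\in O_{\mathbf G}(z)$ with $u\neq v^{-1}$, one has $u\rightarrowpm v$ if and only if $S_{\mathcal O_{\mathbf G}(z)}(u,v)$ is finite. Indeed, if $u=z^a$, $v=z^b$ with $a\mid b$, then every multiple of $b$ is a multiple of $a$, so only the finitely many divisors of $b$ can lie in $S$; if instead $b\mid a$ properly, the multiples $z^{bk}$ with $k$ coprime to $a/b$ give infinitely many witnesses. In case (A), $\varphi$ restricts to a graph isomorphism from $\mathcal O_{\mathbf G}(z)$ to $\mathcal O_{\mathbf H}\big(\varphi(z)\big)$ preserving $\equiv$-classes and the finiteness of these $S$-sets, and applying the criterion in both groups turns $x\rightarrowpm y$ into $\varphi(x)\rightarrowpm\varphi(y)$.

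It remains to treat case (B), which I expect to be the main obstacle, since there $\varphi(x),\varphi(y)$ sit among the roots of $w=\varphi(z)$, and the structure of $\mathcal I_{\mathbf H}$ need not be cyclic. Rather than describing that structure, I would prove one implication directly in $\mathbf H$: if $s,t\in I_{\mathbf H}(w)$ are adjacent with $s\rightarrowpm t$, then $S_C(t,s)$ is finite, where $C=\varphi\big(O_{\mathbf G}(z)\big)$. Writing $w=s^k=t^m$ and $t=s^j$, any $r\in\overline N_C(s)$ with $r\neq s$ is comparable to $s$: if $s\in\langle r\rangle$ then $t\in\langle s\rangle\subseteq\langle r\rangle$ forces $r\simppm t$, so $r\in\overline N_C(t)$; while if $r=s^i$ is a root of $w$, torsion-freeness yields $s^{il}=w=s^k$, hence $i\mid k$, leaving only finitely many such $r$. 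Now suppose, toward a contradiction, that $\varphi(x)\rightarrowpm\varphi(y)$; this computation gives $S_C\big(\varphi(y),\varphi(x)\big)$ finite, and transporting back through the isomorphism $\hat\varphi$ shows $S_{\mathcal O_{\mathbf G}(z)}(y,x)$ finite, whence $y\rightarrowpm x$ by the criterion of the previous paragraph. This contradicts $x\rightarrowpm y$ together with $y\notin\{x,x^{-1}\}$ and torsion-freeness, which forbid both directed edges at once. Hence $\varphi(x)\not\rightarrowpm\varphi(y)$ in case (B), and combining the two cases with the dichotomy completes the equivalence.
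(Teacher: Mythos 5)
Your proposal is correct and follows essentially the same route as the paper's proof: both reduce the statement, via Lemmas \ref{O je komponenta povezanosti od N} and \ref{susedstva su izomorfna}, to the dichotomy that $\varphi\big(O_{\mathbf G}(z)\big)$ is either equal to $O_{\mathbf H}\big(\varphi(z)\big)$ or is a connected component of $\overline{\mathcal M}_{\mathbf H}\big(\varphi(z)\big)$ contained in $I_{\mathbf H}\big(\varphi(z)\big)$, and then detect the direction of the edge through the finite/infinite behaviour of the sets $S$, in the spirit of Lemma \ref{kriterijum za ko koga tuce u torziono slobodnoj}. The only differences are cosmetic: you spell out, via the divisibility computation in $O_{\mathbf G}(z)$ and the root-counting argument in your case (B), the finiteness claims the paper leaves as ``it is not hard to see,'' and in case (B) you transport finiteness of $S$ backwards through $\varphi$ where the paper transports infinitude forwards --- the same tool applied in the opposite direction.
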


\begin{proof}
Because, by Lemma \ref{O je komponenta povezanosti od N}, $O_{\mathbf G}(z)$ and $O_{\mathbf H}\big(\varphi(z)\big)$ induce connected components of $\overline{\mathcal M}_{\mathbf G}(z)$ and $\overline{\mathcal M}_{\mathbf H}\big(\varphi(z)\big)$, respectively, then, by Lemma \ref{susedstva su izomorfna}, $\varphi\big(O_{\mathbf G}(z)\big)= O_{\mathbf H}\big(\varphi(z)\big)$ if and only if $\varphi(x),\varphi(y)\in O_{\mathbf H}\big(\varphi(z)\big)$. Therefore, it is sufficient to prove that $\varphi(x)\rightarrowpm_{\mathbf H}\varphi(y)$ if and only if $\varphi(x),\varphi(y)\in O_{\mathbf H}\big(\varphi(z)\big)$.

By Lemma \ref{samo su torzione sa jedinicnim}, group $\mathbf H$ is torsion-free too. Since $\{x,x^{-1}\}$, $\{y,y^{-1}\}$, and $\{z,z^{-1}\}$ are pairwise disjoint sets, then elements $x$, $y$, and $z$ have distinct closed neighborhoods. Therefore, $\varphi(x)$, $\varphi(y)$, and $\varphi(z)$ have distinct closed neighborhoods, which implies that sets $\{\varphi(x),\varphi(x^{-1})\}$, $\{\varphi(y),\varphi(y^{-1})\}$, and $\{\varphi(z),\varphi(z^{-1})\}$ are pairwise disjoint.

Suppose that $\varphi(x)\rightarrowpm_{\mathbf H}\varphi(y)$, and suppose that $\varphi(x),\varphi(y)\in I_{\mathbf H}\big(\varphi(z)\big)$. Then follows $\varphi(x),\varphi(y)\rightarrowpm_{\mathbf H}\varphi(z)$. By Lemma \ref{O je komponenta povezanosti od N}, $\overline{\mathcal O}_{\mathbf G}(z)$ is a connected component of $\overline{\mathcal M}_{\mathbf G}(z)$, so $\varphi(O_{\mathbf G}(z))$ induces a connected component of $\overline{\mathcal M}_{\mathbf H}\big(\varphi(z)\big)$ isomorphic to $\overline{\mathcal O}_{\mathbf G}(z)$. By Lemma \ref{kriterijum za ko koga tuce u torziono slobodnoj}, $S_{\mathcal O_{\mathbf G}(z)}(y,x)$ is infinite, which implies that $S_{\varphi(\mathcal O_{\mathbf G}(z))}\big(\varphi(y),\varphi(x)\big)$ is infinite. However, it is not hard to see that $\varphi(x)\rightarrowpm_{\mathbf H}\varphi(y)$ and $\varphi(y)\rightarrowpm_{\mathbf H}\varphi(z)$ implies that $S_{\varphi(\mathcal O_{\mathbf G}(z))}\big(\varphi(y),\varphi(x)\big)$ is finite, which is a contradiction. Thus, $\varphi(x)\rightarrowpm_{\mathbf H}\varphi(y)$ implies $\varphi(x),\varphi(y)\in O_{\mathbf H}\big(\varphi(z)\big)$, and it remains to prove the other implication as well.

Suppose that $\varphi(x),\varphi(y)\in O_{\mathbf H}\big(\varphi(z)\big)$, which implies $\varphi\big(O_{\mathbf G}(z)\big)=O_{\mathbf H}\big(\varphi(z)\big)$. Suppose now that $\varphi(x)\not\rightarrowpm_{\mathbf H}\varphi(y)$. That implies $\varphi(y)\rightarrowpm_{\mathbf H}\varphi(x)$, i.e. $\varphi(z)\rightarrowpm_{\mathbf H}\varphi(y)$ i $\varphi(y)\rightarrowpm_{\mathbf H}\varphi(x)$. By Lemma \ref{kriterijum za ko koga tuce u torziono slobodnoj}, it follows that $S_{\mathcal O_{\mathbf H}(\varphi(z))}\big(\varphi(y),\varphi(x)\big)$ is finite. On the other hand, the fact that $S_{\mathcal O_{\mathbf G}(z)}(y,x)$ is infinite implies that $S_{\varphi(\mathcal O_{\mathbf G}(z)}\big(\varphi(y),\varphi(x)\big)=S_{\mathcal O_{\mathbf H}(\varphi(z))}\big(\varphi(y),\varphi(x)\big)$ is infinite, which is a contradiction. Thus the Lemma has been proved.
\end{proof}

\begin{theorem}[{\cite[Teorema 1.5]{cameron3}}]\label{glavna za racionalne na n}
Let $n\in\mathbb N$, and let $\mathbf G$ be a group such that $\mathcal G^\pm(\mathbf G)\cong\mathcal G^\pm(\mathbb Q)$. Then $\vec{\mathcal G}^\pm(\mathbf G)\cong\vec{\mathcal G}^\pm(\mathbb Q^n)$. Moreover, every isomorphism from $\mathcal G^\pm(\mathbf G)$ to $\mathcal G^\pm(\mathbb Q)$ is an isomorphism or an anti-isomorphism from $\vec{\mathcal G}^\pm(\mathbf G)$ to $\vec{\mathcal G}^\pm(\mathbb Q)$.
\end{theorem}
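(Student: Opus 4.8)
The plan is to recover the edge-directions of $\vec{\mathcal G}^\pm(\mathbf G)$ from the undirected graph up to a single global reversal, and then to note that a global reversal is harmless because $\vec{\mathcal G}^\pm(\mathbb Q)$ is isomorphic to its own transpose. First I would record that, since $\mathbb Q$ is torsion-free, $\mathcal G^\pm(\mathbb Q)$ has an isolated vertex, so by Lemma \ref{samo su torzione sa jedinicnim} the group $\mathbf G$ is torsion-free as well, and any isomorphism $\varphi\colon\mathcal G^\pm(\mathbf G)\to\mathcal G^\pm(\mathbb Q)$ must send the identity of $\mathbf G$, the unique isolated vertex, to $0$. It then suffices to prove the ``Moreover'' assertion, namely that every such $\varphi$ is an isomorphism or an anti-isomorphism of the directed $Z^\pm$-power graphs. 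Indeed, once that is known the first claim follows: if $\varphi$ happens to be an anti-isomorphism, then composing with the map $\varphi_a$ of Lemma \ref{reciprocni prave izomorfizam}, which witnesses $\vec{\mathcal G}^\pm(\mathbb Q)\cong\big(\vec{\mathcal G}^\pm(\mathbb Q)\big)^T$, still yields $\vec{\mathcal G}^\pm(\mathbf G)\cong\vec{\mathcal G}^\pm(\mathbb Q)$.

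The next step is to attach a binary label to each non-identity $z\in G$. By Lemma \ref{O je komponenta povezanosti od N}, $O_{\mathbf G}(z)$ induces a connected component of $\overline{\mathcal M}_{\mathbf G}(z)$, and by Lemma \ref{susedstva su izomorfna} the isomorphism $\varphi$ carries it onto a connected component of $\overline{\mathcal M}_{\mathbb Q}\big(\varphi(z)\big)$. Using Lemma \ref{O je komponenta povezanosti od N} together with the automorphism $\varphi_a$ of Lemma \ref{reciprocni prave izomorfizam}, which interchanges $\mathcal I_{\mathbb Q}(a)$ and $\mathcal O_{\mathbb Q}(a)$, one sees that $\overline{\mathcal M}_{\mathbb Q}\big(\varphi(z)\big)$ has exactly the two components $\overline{\mathcal O}_{\mathbb Q}\big(\varphi(z)\big)$ and $\overline{\mathcal I}_{\mathbb Q}\big(\varphi(z)\big)$. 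Hence $\varphi\big(O_{\mathbf G}(z)\big)$ equals $O_{\mathbb Q}\big(\varphi(z)\big)$ or $I_{\mathbb Q}\big(\varphi(z)\big)$; in the first case I call $z$ \emph{preserving}, in the second \emph{reversing}. Reading off the definitions, $z$ is preserving if and only if every out-edge $z\rightarrowpm w$ satisfies $\varphi(z)\rightarrowpm\varphi(w)$, and by Lemma \ref{pomocna za lokalno ciklicne} this happens if and only if $\varphi$ preserves the direction of every edge internal to $O_{\mathbf G}(z)$.

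The heart of the argument, and the step I expect to be the main obstacle, is to show that this label is constant over the whole graph, so that either every non-identity vertex is preserving or every one is reversing. I would prove that the label propagates along each directed edge in both directions. Suppose $z$ is preserving and $z\rightarrowpm w$. Since $\mathcal O_{\mathbf G}(w)\cong\mathcal O_{\mathbb Q}\big(\varphi(w)\big)$ is infinite, I may choose $u\in O_{\mathbf G}(w)$ with $u\notin\{w,w^{-1}\}$; transitivity of the power relation gives $w,u\in O_{\mathbf G}(z)$ with $w\rightarrowpm u$, so Lemma \ref{pomocna za lokalno ciklicne} applied at the base $z$ forces $\varphi(w)\rightarrowpm\varphi(u)$, which exhibits a preserved out-edge of $w$ and hence makes $w$ preserving. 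For the opposite direction, if $z$ is preserving and $w\rightarrowpm z$, I would choose $v\in O_{\mathbf G}(z)$ avoiding $\{z,z^{-1},w,w^{-1}\}$, note $z,v\in O_{\mathbf G}(w)$ with $z\rightarrowpm v$, and observe that $z$ being preserving gives $\varphi(z)\rightarrowpm\varphi(v)$; Lemma \ref{pomocna za lokalno ciklicne} at the base $w$ then yields that $w$ is preserving. Because the non-identity part of $\mathcal G^\pm(\mathbb Q)$ is connected---every nonzero rational $p/q$ is joined to the integer $p$, which is joined to $1$---the same holds for $\mathbf G$ through $\varphi$, and the two propagation statements force the label to be globally constant.

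Finally I would assemble the conclusion. If every vertex is preserving, then $\varphi$ preserves the direction of each edge between distinct non-inverse elements; the mutual edges inside each pair $\{x,x^{-1}\}$ are symmetric and so are automatically respected, and the isolated identity of $\mathbf G$ is sent to the isolated $0$, whence $\varphi$ is an isomorphism $\vec{\mathcal G}^\pm(\mathbf G)\to\vec{\mathcal G}^\pm(\mathbb Q)$. If every vertex is reversing, the same bookkeeping shows that $\varphi$ reverses every such direction, that is, $\varphi$ is an anti-isomorphism. This establishes the ``Moreover'' statement, and together with the self-anti-isomorphism of $\vec{\mathcal G}^\pm(\mathbb Q)$ noted at the outset it also gives $\vec{\mathcal G}^\pm(\mathbf G)\cong\vec{\mathcal G}^\pm(\mathbb Q)$.
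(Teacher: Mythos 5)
The paper contains no proof of this theorem to compare against: it is imported verbatim, with attribution, from \cite[Theorem 1.5]{cameron3}. The closest internal relative is the proof of the paper's final theorem on nilpotency class $2$, which deploys Lemma \ref{pomocna za lokalno ciklicne} in exactly your propagation style, so your reconstruction is very much in the spirit of the paper's toolkit. Judged on its own merits, your argument is sound for the case it treats: the identification of $e_{\mathbf G}$ with $0$ via the unique isolated vertex (Lemma \ref{samo su torzione sa jedinicnim}); the well-definedness of the preserving/reversing label, correctly combining Lemmas \ref{O je komponenta povezanosti od N} and \ref{susedstva su izomorfna} with the $\varphi_a$-swap of Lemma \ref{reciprocni prave izomorfizam} to see that $\overline{\mathcal M}_{\mathbb Q}\big(\varphi(z)\big)$ has precisely the two components $\overline{\mathcal O}_{\mathbb Q}\big(\varphi(z)\big)$ and $\overline{\mathcal I}_{\mathbb Q}\big(\varphi(z)\big)$ (here one should note $I\cap O=\emptyset$, which holds since the groups are torsion-free); the propagation of the label along arrows in both directions via Lemma \ref{pomocna za lokalno ciklicne}, where your choices of auxiliary vertices $u$ and $v$ do satisfy that lemma's hypotheses; and the reduction of the first claim to the ``Moreover'' clause by composing with $\varphi_a$. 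One small patch is needed: your propagation cannot be run across an edge joining $z$ to $z^{-1}$, because Lemma \ref{pomocna za lokalno ciklicne} requires the two endpoints to be non-inverse elements of some $O_{\mathbf G}(w)$, and $z^{-1}\notin O_{\mathbf G}(z)$. This is harmless, since $O_{\mathbf G}(z)=O_{\mathbf G}(z^{-1})$ and $\varphi(z^{-1})=\varphi(z)^{-1}$ force inverse pairs to share a label, so constancy along paths survives; but it should be said.

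What you do not address is the $\mathbb Q^n$ clause, and here the printed statement itself is defective: with hypothesis $\mathcal G^\pm(\mathbf G)\cong\mathcal G^\pm(\mathbb Q)$, the conclusion $\vec{\mathcal G}^\pm(\mathbf G)\cong\vec{\mathcal G}^\pm(\mathbb Q^n)$ is false for every $n\geq 2$, since $\mathcal G^\pm(\mathbb Q)$ has a single non-trivial connected component while $\mathcal G^\pm(\mathbb Q^n)$, $n\geq 2$, has infinitely many (one for each punctured line $\{qu\mid q\in\mathbb Q\setminus\{0\}\}$). The intended hypothesis, as in \cite[Theorem 1.5]{cameron3}, is $\mathcal G^\pm(\mathbf G)\cong\mathcal G^\pm(\mathbb Q^n)$. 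Your proof covers exactly the $n=1$ reading, which is the only case this paper uses (the corollary on $\mathbb Q$ that follows the theorem). To recover the general case along your lines, run the labelling argument on each non-trivial component of $\mathcal G^\pm(\mathbf G)$ separately --- each maps onto a component of $\mathcal G^\pm(\mathbb Q^n)$ isomorphic to the non-zero part of $\mathcal G^\pm(\mathbb Q)$, so the two-component dichotomy and the propagation apply verbatim --- and then correct each reversing component by composing with a transported copy of the $\varphi_a$-reversal; this yields $\vec{\mathcal G}^\pm(\mathbf G)\cong\vec{\mathcal G}^\pm(\mathbb Q^n)$. Note also that the global ``Moreover'' dichotomy is genuinely special to $n=1$: for $n\geq 2$ an isomorphism may preserve arrows on one component and reverse them on another, so the dichotomy holds only componentwise, consistent with the theorem stating it only for $\mathbb Q$.
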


\begin{corollary}
Let $\mathbf G$ be a group such that $\mathcal G^\pm(\mathbf G)\cong\mathcal G^\pm(\mathbb Q)$. Then $\mathbf G\cong(\mathbb Q,+)$.
\end{corollary}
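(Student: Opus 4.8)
The plan is to reduce the statement to Lemma~\ref{q je jedina sa izomorfnim ulaznim i izlaznim}, whose hypotheses are precisely that $\mathbf G$ is torsion-free, locally cyclic, and satisfies $\mathcal O_{\mathbf G}(x)\cong\mathcal I_{\mathbf G}(x)$ for every $x\in G$. I would verify these three conditions one at a time, extracting the needed structure from Theorem~\ref{glavna za racionalne na n} and from the self-duality of $\mathbb Q$ recorded in Lemma~\ref{reciprocni prave izomorfizam}.

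First I would handle torsion-freeness: since $\mathbb Q$ is torsion-free, Lemma~\ref{samo su torzione sa jedinicnim} tells us $\mathcal G^\pm(\mathbb Q)$ has an isolated vertex, hence so does the isomorphic graph $\mathcal G^\pm(\mathbf G)$, and the same lemma then forces $\mathbf G$ to be torsion-free. Next, fix an isomorphism $\varphi$ from $\mathcal G^\pm(\mathbf G)$ to $\mathcal G^\pm(\mathbb Q)$. By Theorem~\ref{glavna za racionalne na n}, $\varphi$ is either an isomorphism or an anti-isomorphism from $\vec{\mathcal G}^\pm(\mathbf G)$ to $\vec{\mathcal G}^\pm(\mathbb Q)$; and because Lemma~\ref{reciprocni prave izomorfizam} supplies an isomorphism from the directed power graph of $\mathbb Q$ to its own transpose, in either case one obtains $\vec{\mathcal G}^\pm(\mathbf G)\cong\vec{\mathcal G}^\pm(\mathbb Q)$.

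For local cyclicity I would observe that being locally cyclic is equivalent to the purely digraph-theoretic condition that any two non-identity vertices $x,y$ admit a common vertex $z$ with $x,y\in\langle z\rangle$ (that is, $z$ points into each of them or coincides with one of them); since membership in a cyclic subgroup is exactly the successor relation of $\vec{\mathcal G}^\pm$, this property is an isomorphism invariant of the directed power graph. As $\mathbb Q$ is locally cyclic and $\vec{\mathcal G}^\pm(\mathbf G)\cong\vec{\mathcal G}^\pm(\mathbb Q)$, the group $\mathbf G$ is locally cyclic as well, which is the standing hypothesis Lemma~\ref{q je jedina sa izomorfnim ulaznim i izlaznim} requires (and which in turn, via Theorem~\ref{sve torziono slobodne lokalno ciklicne su u q}, realizes $\mathbf G$ as a subgroup of $(\mathbb Q,+)$).

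The main step is the third condition, $\mathcal O_{\mathbf G}(x)\cong\mathcal I_{\mathbf G}(x)$, and this is where the ``isomorphism-or-anti-isomorphism'' clause of Theorem~\ref{glavna za racionalne na n} combines decisively with the self-duality of $\mathbb Q$. Using that $\varphi$ preserves the $2$-cycles $\{x,x^{-1}\}$, so that $\varphi(x^{-1})=\varphi(x)^{-1}$, I would check that $\varphi$ carries $O_{\mathbf G}(x)$ onto $O_{\mathbb Q}(\varphi(x))$ and $I_{\mathbf G}(x)$ onto $I_{\mathbb Q}(\varphi(x))$ in the directed-isomorphism case, and interchanges these roles in the directed-anti-isomorphism case; restricting the undirected isomorphism $\varphi$ to these sets then yields $\mathcal O_{\mathbf G}(x)\cong\mathcal O_{\mathbb Q}(\varphi(x))$, $\mathcal I_{\mathbf G}(x)\cong\mathcal I_{\mathbb Q}(\varphi(x))$ in the first case and the cross-identifications in the second. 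In both cases Lemma~\ref{reciprocni prave izomorfizam}, which gives $\mathcal O_{\mathbb Q}(a)\cong\mathcal I_{\mathbb Q}(a)$ for every $a$, collapses the two possibilities to the single conclusion $\mathcal O_{\mathbf G}(x)\cong\mathcal I_{\mathbf G}(x)$, whereupon Lemma~\ref{q je jedina sa izomorfnim ulaznim i izlaznim} delivers $\mathbf G\cong(\mathbb Q,+)$. The delicate point to get right is exactly the bookkeeping around $x^{-1}$ and the split into the isomorphism and anti-isomorphism cases, since that is where the directed structure — and hence the feature distinguishing $\mathbb Q$ from its non-isomorphic locally cyclic subgroups — genuinely enters the argument.
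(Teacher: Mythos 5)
Your proof is correct and takes essentially the same route as the paper's: Theorem~\ref{glavna za racionalne na n} gives $\vec{\mathcal G}^\pm(\mathbf G)\cong\vec{\mathcal G}^\pm(\mathbb Q)$, from which torsion-freeness, local cyclicity and $\mathcal O_{\mathbf G}(x)\cong\mathcal I_{\mathbf G}(x)$ for all $x\in G$ follow, and Lemma~\ref{q je jedina sa izomorfnim ulaznim i izlaznim} then yields $\mathbf G\cong(\mathbb Q,+)$. The only difference is that you spell out in full the invariance arguments and the $x^{-1}$-bookkeeping (via Lemma~\ref{samo su torzione sa jedinicnim}, the digraph characterization of local cyclicity, and Lemma~\ref{reciprocni prave izomorfizam}) that the paper's three-line proof states as ``it follows.''
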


\begin{proof}
Because $\mathcal G^\pm(\mathbf G)\cong\mathcal G^\pm(\mathbb Q)$, then, by Theorem \ref{glavna za racionalne na n}, $\vec{\mathcal G}^\pm(\mathbf G)\cong\vec{\mathcal G}^\pm(\mathbb Q)$. It follows that $\mathbf G$ is a locally cyclic group such that, for all $x\in G$, holds $\mathcal O_{\mathbf G}(x)\cong\mathcal I_{\mathbf G}(x)$. Therefore, by Lemma \ref{q je jedina sa izomorfnim ulaznim i izlaznim}, follows that $\mathbf G$ is isomorphic to the group of rational numbers.
\end{proof}

\begin{lemma}[{\cite[Lemma 7.1]{cameron3}}]\label{komponenta povezanosti je lokalno ciklicna grupa}
Let $\mathbf G$ be a torsion-free group of nilpotentcy class $2$, and let $C\subseteq G$ induce a connected component of $\mathcal G^\pm(\mathbf G)$. Then $C\cup\{e\}$ is the universe of a locally cyclic subgroup of $\mathbf G$.
\end{lemma}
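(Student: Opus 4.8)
The plan is to reduce the statement to two facts about the non-identity vertices of the component $C$: that they pairwise commute, and that any two of them are \emph{dependent}, in the sense that some non-zero power of one equals some non-zero power of the other. First I would dispose of the trivial case. By Lemma \ref{samo su torzione sa jedinicnim} the identity $e$ is an isolated vertex of $\mathcal G^\pm(\mathbf G)$, so if $C=\{e\}$ the claim is immediate; otherwise $C$ consists entirely of non-identity elements and $e\notin C$. Throughout I would work with the relation ``there exist $m,n\in\mathbb Z\setminus\{0\}$ with $x^m=y^n$'' on the vertices of $C$, and aim to show that $C\cup\{e\}$ is closed under the group operations and has rank at most $1$.

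I would begin by showing that any two vertices of $C$ are dependent. If $x\simppm y$, then one of them is a non-zero power of the other, say $y=x^n$ with $n\ne 0$, which already gives $x^n=y^1$; hence adjacent vertices are dependent. The relation is transitive: from $x^a=z^b$ and $z^c=y^d$ with all exponents non-zero, raising the first equality to the power $c$ and the second to the power $b$ yields $x^{ac}=z^{bc}=y^{bd}$, which uses only the power law $(u^i)^j=u^{ij}$ for a single element and needs no commutativity. Walking along a path in $C$ then shows that any two of its vertices are dependent.

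Next I would invoke the hypothesis of nilpotency class $2$ to upgrade dependence to commutativity, and this is the step I expect to be the main obstacle, since commuting is not a transitive relation and must be forced globally on $C$. In a group of class $2$ the commutator map is bilinear, so in particular $[x,y^n]=[x,y]^n$ for every integer $n$. If $x,y\in C$, then $x^m=y^n$ for some non-zero $m,n$, whence $[x,y]^n=[x,y^n]=[x,x^m]=e$; since $\mathbf G$ is torsion-free and $n\ne 0$, this forces $[x,y]=e$. Thus all elements of $C$ pairwise commute.

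Finally I would verify that $C\cup\{e\}$ is a locally cyclic subgroup. Closure under inversion is immediate, since $x^{-1}$ is a non-zero power of $x$ and $x\ne x^{-1}$ in a torsion-free group, so $x^{-1}\in C$. For closure under multiplication, take $x,y\in C$; they commute and are dependent, so $\langle x,y\rangle$ is a finitely generated torsion-free abelian group in which $x$ and $y$ are dependent, hence of rank $1$ and therefore cyclic, say $\langle x,y\rangle=\langle w\rangle$. Since $x$ is a non-zero power of $w$, the vertex $w$ lies in $C$, and as $xy$ is a power of $w$ it is either $e$ or adjacent to $w$, so $xy\in C\cup\{e\}$. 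Hence $C\cup\{e\}$ is a subgroup; being torsion-free abelian with every two elements dependent, it has rank at most $1$, so every finitely generated subgroup is cyclic. This is exactly the assertion that $C\cup\{e\}$ is locally cyclic (equivalently, by Theorem \ref{sve torziono slobodne lokalno ciklicne su u q}, a subgroup of $(\mathbb Q,+)$), completing the proof.
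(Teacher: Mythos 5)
Your proposal is correct, and it follows essentially the same route as the source of this result: the paper states the lemma without proof, citing \cite[Lemma~7.1]{cameron3}, and the argument there runs along the same lines as yours --- adjacency gives dependence ($x^m=y^n$ with $m,n\neq 0$), dependence is transitive along paths, the class-$2$ identity $[x,y^n]=[x,y]^n$ together with torsion-freeness upgrades dependence to commutativity, and a finitely generated torsion-free abelian group with dependent generators is cyclic, which yields both closure of $C\cup\{e\}$ under the operations and local cyclicity. I see no gaps; in particular you correctly handle the two points that are easy to overlook, namely that transitivity of dependence needs no commutativity and that $[x,y]^n=e$ kills the commutator only because $\mathbf G$ is torsion-free.
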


\begin{theorem}
Let $\mathbf G$ be a torsion-free group of nilpotency class $2$, and let $\mathbf G$ be a group such that $\mathcal G^\pm(\mathbf G)\cong \mathcal G^\pm(\mathbf H)$. Then $\vec{\mathcal G}^\pm(\mathbf G)\cong\vec{\mathcal G}^\pm(\mathbf H)$.
\end{theorem}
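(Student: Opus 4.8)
The plan is to upgrade the given undirected isomorphism $\varphi\colon\mathcal G^\pm(\mathbf G)\to\mathcal G^\pm(\mathbf H)$ to a directed isomorphism one connected component at a time. First I would record the setup. By Lemma \ref{samo su torzione sa jedinicnim}, $\mathbf H$ is torsion-free as well, and in each graph the identity is the unique isolated vertex, so $\varphi(e_{\mathbf G})=e_{\mathbf H}$ and $\varphi$ restricts to a bijection between the connected components of $\mathcal G^\pm(\mathbf G)$ and those of $\mathcal G^\pm(\mathbf H)$. Since a directed edge of $\vec{\mathcal G}^\pm$ never joins distinct components and the identity is isolated in $\vec{\mathcal G}^\pm(\mathbf G)$ and $\vec{\mathcal G}^\pm(\mathbf H)$, it suffices to produce, for each component $C$ of $\mathcal G^\pm(\mathbf G)$ with image $D=\varphi(C)$, a directed isomorphism $\vec{\mathcal G}^\pm(\mathbf G)[C]\to\vec{\mathcal G}^\pm(\mathbf H)[D]$, and then to glue these together with $e_{\mathbf G}\mapsto e_{\mathbf H}$. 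By Lemma \ref{komponenta povezanosti je lokalno ciklicna grupa}, $C\cup\{e_{\mathbf G}\}$ is the universe of a locally cyclic torsion-free subgroup $\mathbf L_C$, which by Theorem \ref{sve torziono slobodne lokalno ciklicne su u q} embeds in $(\mathbb Q,+)$. I would also note the key local fact that, for every $w\in C$, the set $O_{\mathbf G}(w)=\{w^n:n\in\mathbb Z\setminus\{-1,0,1\}\}$ carries exactly the $Z^\pm$-power graph of the non-unit elements of the infinite cyclic group $\langle w\rangle$.

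Second, I would analyse orientation locally. For a non-identity vertex $z$ of the torsion-free group $\mathbf G$, Lemma \ref{O je komponenta povezanosti od N} shows that $O_{\mathbf G}(z)$ and $I_{\mathbf G}(z)$ are precisely the two connected components of $\overline{\mathcal M}_{\mathbf G}(z)$, and Lemma \ref{susedstva su izomorfna} shows that $\varphi$ carries $\overline{\mathcal M}_{\mathbf G}(z)$ isomorphically onto $\overline{\mathcal M}_{\mathbf H}(\varphi(z))$; hence $\varphi$ sends the unordered pair $\{O_{\mathbf G}(z),I_{\mathbf G}(z)\}$ onto $\{O_{\mathbf H}(\varphi(z)),I_{\mathbf H}(\varphi(z))\}$, so at each $z$ it either \emph{preserves} the orientation ($\varphi(O_{\mathbf G}(z))=O_{\mathbf H}(\varphi(z))$) or \emph{reverses} it. I would then show this status is constant on $C$. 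Applying Lemma \ref{pomocna za lokalno ciklicne} with $z$ itself as the common predecessor of pairs in $O_{\mathbf G}(z)$, preservation at $z$ is equivalent to $\varphi$ preserving every directed edge inside $O_{\mathbf G}(z)$. If $z\rightarrow z'$ then $O_{\mathbf G}(z')\subseteq O_{\mathbf G}(z)$ and $O_{\mathbf G}(z')$ always contains an internal edge (as $\langle z'\rangle$ is infinite cyclic); applying Lemma \ref{pomocna za lokalno ciklicne} to such an edge with predecessor $z$ and then with predecessor $z'$ forces the status at $z$ and $z'$ to coincide, so by connectedness of $C$ the status is uniform. Thus $\varphi|_C$ is either a directed isomorphism or a directed anti-isomorphism onto $\vec{\mathcal G}^\pm(\mathbf H)[D]$.

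The decisive case split, which I expect to be the main obstacle, is how to deal with a component on which $\varphi|_C$ reverses orientation; whether this can be repaired depends entirely on $\mathbf L_C$. If $\mathbf L_C\cong(\mathbb Q,+)$, then by Lemma \ref{reciprocni prave izomorfizam} the directed power graph of $\mathbf L_C$ admits an anti-automorphism $\varphi_a$, so $\varphi|_C\circ\varphi_a$ reverses twice and is a genuine directed isomorphism $C\to D$. If $\mathbf L_C\not\cong(\mathbb Q,+)$, I claim reversal is impossible. By the contrapositive of Lemma \ref{q je jedina sa izomorfnim ulaznim i izlaznim} there is $x_0\in C$ with $\mathcal O_{\mathbf G}(x_0)\not\cong\mathcal I_{\mathbf G}(x_0)$; yet, by the local fact above, both $\mathcal O_{\mathbf G}(x_0)$ and $\mathcal O_{\mathbf H}(\varphi(x_0))$ are the $Z^\pm$-power graph of the non-unit elements of an infinite cyclic group, so $\mathcal O_{\mathbf G}(x_0)\cong\mathcal O_{\mathbf H}(\varphi(x_0))$. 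A reversal at $x_0$ would give $\varphi(I_{\mathbf G}(x_0))=O_{\mathbf H}(\varphi(x_0))$, hence $\mathcal I_{\mathbf G}(x_0)\cong\mathcal O_{\mathbf H}(\varphi(x_0))\cong\mathcal O_{\mathbf G}(x_0)$, contradicting the asymmetry of $x_0$. Therefore $\varphi|_C$ preserves orientation and is already the desired isomorphism. (Note this argument needs nothing about the global structure of $\mathbf H$, only that it is torsion-free, since $O_{\mathbf H}(\varphi(x_0))$ depends only on the infinite cyclic group $\langle\varphi(x_0)\rangle$.)

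Finally I would assemble the pieces. Each component contributes a directed isomorphism $C\to D$ with the same vertex-image $D$ (the correction $\varphi_a$ only permutes $C$), and together with $e_{\mathbf G}\mapsto e_{\mathbf H}$ these form a bijection $G\to H$ preserving every directed edge, because all directed edges are internal to single components; this gives $\vec{\mathcal G}^\pm(\mathbf G)\cong\vec{\mathcal G}^\pm(\mathbf H)$. The conceptual heart, and the part most needing care, is the reversal dichotomy of the third paragraph: the recognition that $(\mathbb Q,+)$ is exactly the locally cyclic group whose directed power graph is self-anti-isomorphic, so that a reversal is harmless there but would otherwise be fatal and is hence excluded. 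This is precisely the information packaged in Lemmas \ref{reciprocni prave izomorfizam} and \ref{q je jedina sa izomorfnim ulaznim i izlaznim}.
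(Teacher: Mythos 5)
Your proposal is correct, and it uses the same toolkit as the paper---per-component reduction, Lemma \ref{pomocna za lokalno ciklicne} for the isomorphism/anti-isomorphism dichotomy on each component, then Lemma \ref{q je jedina sa izomorfnim ulaznim i izlaznim} together with Lemma \ref{reciprocni prave izomorfizam} to dispose of the reversing case---but your uniformity step is genuinely different. The paper fixes one reference edge $x\rightarrowpm y$ in $C$ and, for any other edge $u\rightarrowpm v$, invokes local cyclicity of $C\cup\{e_{\mathbf G}\}$ (Lemma \ref{komponenta povezanosti je lokalno ciklicna grupa}) to find $w$ with $x,u\in\langle w\rangle$, so that both edges lie inside $O_{\mathbf G}(w)$ and Lemma \ref{pomocna za lokalno ciklicne} ties both of their statuses to the single condition $\varphi\big(O_{\mathbf G}(w)\big)=O_{\mathbf H}\big(\varphi(w)\big)$. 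You instead propagate the status across each edge $z\rightarrowpm z'$ using $O_{\mathbf G}(z')\subseteq O_{\mathbf G}(z)$ and a doubly-internal edge such as $z'^2\rightarrowpm z'^4$, applying the lemma twice; this is sound (torsion-freeness guarantees $z'^4\not\in\{z'^2,z'^{-2}\}$) and it buys something the paper's version does not make visible: your dichotomy argument uses no local cyclicity at all, so it shows that for \emph{any} torsion-free $\mathbf G$ the restriction $\varphi|_C$ is an isomorphism or anti-isomorphism of the induced directed graphs, nilpotency class $2$ entering only through Lemmas \ref{komponenta povezanosti je lokalno ciklicna grupa} and \ref{q je jedina sa izomorfnim ulaznim i izlaznim} in the reversing case. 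Your treatment of that case (contrapositive of Lemma \ref{q je jedina sa izomorfnim ulaznim i izlaznim} plus the intrinsic fact $\mathcal O_{\mathbf G}(x)\cong\mathcal O_{\mathbf H}(y)$ for all non-identity $x,y$) is a reorganization of the paper's derivation, which instead concludes $\mathcal O_{\mathbf G}(x)\cong\mathcal I_{\mathbf G}(x)$ directly from the anti-isomorphism together with Lemma \ref{susedstva su izomorfna}; both routes are valid.

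One assertion in your second paragraph is false as stated: $O_{\mathbf G}(z)$ and $I_{\mathbf G}(z)$ are \emph{not} in general ``precisely the two connected components'' of $\overline{\mathcal M}_{\mathbf G}(z)$. Lemma \ref{O je komponenta povezanosti od N} only asserts that $\overline{\mathcal O}_{\mathbf G}(z)$ is one component; $\overline{\mathcal I}_{\mathbf G}(z)$ may be disconnected. For instance, in $(\mathbb Z,+)$ with $z=36$, the elements $\pm 1$ belong to $I_{\mathbf G}(36)$ and are power-graph adjacent to every element of $M_{\mathbf G}(36)$, hence are isolated vertices of $\overline{\mathcal M}_{\mathbf G}(36)$. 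The error is harmless, because everything downstream uses only the weaker dichotomy that is actually available: by Lemmas \ref{O je komponenta povezanosti od N} and \ref{susedstva su izomorfna}, $\varphi\big(O_{\mathbf G}(z)\big)$ is a connected component of $\overline{\mathcal M}_{\mathbf H}\big(\varphi(z)\big)$, hence either equals $O_{\mathbf H}\big(\varphi(z)\big)$ or is disjoint from it and therefore contained in $I_{\mathbf H}\big(\varphi(z)\big)$. With ``reverses'' read as this containment, your propagation goes through verbatim, and the equality $\varphi\big(I_{\mathbf G}(x_0)\big)=O_{\mathbf H}\big(\varphi(x_0)\big)$ that your contradiction needs is recovered because $\varphi$ maps $M_{\mathbf G}(x_0)$ onto $M_{\mathbf H}\big(\varphi(x_0)\big)$ and the two containments partition a disjoint union. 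I recommend making that repair explicit; otherwise the argument stands.
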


\begin{proof}
Let $\varphi: G\rightarrow H$ be an isomorphism from $\mathcal G^\pm(\mathbf G)$ to $\mathcal G^\pm(\mathbf H)$. Then $\varphi$ maps every connected component of $\mathcal G^\pm(\mathbf G)$ onto some connected component of $\mathcal G^\pm(\mathbf H)$. Let $C$ non-trivial connected component of $\mathcal G^\pm(\mathbf G)$, and let us denote $\varphi(C)$ with $D$. By Lemma \ref{komponenta povezanosti je lokalno ciklicna grupa}, $C\cup\{e_{\mathbf G}\}$ is the universe of a locally cyclic subgroup of $\mathbf G$, that we shall denote with $\hat{\mathbf C}$. Let $x,y\in C$ such that $x\rightarrowpm_{\mathbf G} y$ and $y\not\in\{x,x^{-1}\}$.

Let us show that $\varphi|_C$ is an isomorphism or an anti-isomorphism from graph $\big(\vec{\mathcal G}^\pm(\mathbf G)\big)[C]$ to $\big(\vec{\mathcal G}^\pm(\mathbf H)\big)[D]$. Let $u,v\in C$ be such that $u\rightarrowpm_{\mathbf G}v$. Since $\varphi$ is an isomorphism from $\Gamma$ to $\Delta$, then $\varphi(u)\rightarrowpm_{\mathbf H}\varphi(v)$ or $\varphi(v)\rightarrowpm_{\mathbf H}\varphi(u)$.  Obviously, if $v\in\{u,u^{-1}\}$, then so does  $\varphi(v)\in\{\varphi(u),\varphi(u^{-1})\}$ because in this case $u\equiv_{\mathcal G^\pm(\mathbf G)} v$ and $\varphi(u)\equiv_{\mathcal G^\pm(\mathbf H)}\varphi(v)$, so suppose that $v\not\in\{u,u^{-1}\}$.  Because $\hat{\mathbf C}$ is a locally cyclic subgroup of $\mathbf G$, there exists $w\in C$ such that $x,u\in\langle w\rangle$. Suppose further that $w\not\in\{x,x^{-1},u,u^{-1}\}$. In this case, by Lemma \ref{pomocna za lokalno ciklicne}, $\varphi(x)\rightarrowpm_{\mathbf H}\varphi(y)$ if and only if $\varphi\big(O_{\mathbf G}(w)\big)= O_{\mathbf H}\big(\varphi(w)\big)$, which is, by the same lemma, equivalent to $\varphi(u)\rightarrowpm_{\mathbf H}\varphi(v)$. It is shown in a similar manner that the same equivalence holds when $w\not\in\{x,x^{-1},u,u^{-1}\}$, so $\varphi|_C$ is an isomorphism or an anti-isomorphism from $\big(\vec{\mathcal G}^\pm(\mathbf G)\big)[C]$ to $\big(\vec{\mathcal G}^\pm(\mathbf H)\big)[D]$.

Next we prove that $\big(\vec{\mathcal G}^\pm(\mathbf G)\big)[C]\cong\big(\vec{\mathcal G}^\pm(\mathbf H)\big)[D]$, which obviously holds if $\varphi|_C$ is an isomorphism. If $\varphi|_C$ is an anti-isomorphism, then, for each $x\in C$, $O_{\mathbf G}(x)$ is being mapped onto $I_{\mathbf H}\big(\varphi(x)\big)$, and $I_{\mathbf G}(x)$ is being mapped into $O_{\mathbf H}\big(\varphi(x)\big)$. It follows that $\mathcal O_{\mathbf G}(x)\cong\mathcal I_{\mathbf G}(x)$, which, by Lemma \ref{q je jedina sa izomorfnim ulaznim i izlaznim}, implies $\hat{\mathbf C}\cong(\mathbb Q,+)$. Then, by Lemma \ref{reciprocni prave izomorfizam}, follows $\big(\vec{\mathcal G}^\pm(\mathbf G)\big)[C]\cong\big(\vec{\mathcal G}^\pm(\mathbf H)\big)[D]$.

Now we have proved that, for every connected component of $C$ to $\mathcal G^\pm(\mathbf G)$, $\big(\vec{\mathcal G}^\pm(\mathbf G)\big)[C]\cong\big(\vec{\mathcal G}^\pm(\mathbf H)\big)\big[\varphi(C)\big]$. By that, groups $\mathbf G$ and $\mathbf H$ have isomorphic directed $Z^\pm$-power graphs, which proves the theorem.
\end{proof}


\end{document}